\tikzstyle{arrow}=[draw, -latex]
\newtheoremstyle{dotless}{}{}{\itshape}{}{\bfseries}{}{}{}
\theoremstyle{dotless}
\theoremstyle{plain}
\newtheorem{thm}{Theorem}[section]
\theoremstyle{definition}
\newtheorem{defn}[thm]{Definition}
\newtheorem{rem}[thm]{Remark}
\newcommand{\N} {\mathbb{N}}
\newcommand{\R} {\mathbb{R}}
\newcommand{\C} {\mathbb{C}}
\DeclareMathOperator{\re}{Re}
\DeclareMathOperator{\im}{Im}
\providecommand{\differential}{\mathrm{d}}
\renewcommand{\d}{\differential}
\newcommand{\vertiii}[1]{{\left\vert\kern-0.25ex\left\vert\kern-0.25ex\left\vert #1 
    \right\vert\kern-0.25ex\right\vert\kern-0.25ex\right\vert}}  
\begin{document}

\title[Abstract Cauchy problem]{The abstract Cauchy problem in locally convex spaces}
\author[K.~Kruse]{Karsten Kruse\,\orcidlink{0000-0003-1864-4915}}
\thanks{K.~Kruse acknowledges the support by the Deutsche Forschungsgemeinschaft (DFG) within the Research Training
 Group GRK 2583 ``Modeling, Simulation and Optimization of Fluid Dynamic Applications''.}
\address{Hamburg University of Technology\\ Institute of Mathematics \\
Am Schwarzenberg-Campus~3 \\
21073 Hamburg \\
Germany}
\email{karsten.kruse@tuhh.de}

\subjclass[2020]{Primary 34A12, 47A10, Secondary 46F15, 44A10}

\keywords{abstract Cauchy problem, asymptotic Laplace transform, asymptotic resolvent, hyperfunction} 
%abstract Cauchy problem, 
%asymptotic resolvent

\date{\today}
\begin{abstract}
We derive necessary and sufficient criteria for the uniqueness and existence of solutions 
of the abstract Cauchy problem in locally convex Hausdorff spaces. 
Our approach is based on a suitable notion of an asymptotic Laplace transform 
and extends results of Langenbruch beyond the class of Fr\'echet spaces. 
\end{abstract}

\maketitle
\section{Introduction}
\label{sect:Introduction}

We study the abstract Cauchy problem in locally convex Hausdorff spaces in the present paper. 
This is an initial value problem of the form
\begin{align*}
x'(t)&=Ax(t),\quad t>0,\\
x(0)&=x_{0}\in E,
\end{align*}
where $A\colon D(A)\subset E\to E$ is a sequentially closed linear operator and $E$ a sequentially complete 
locally convex Hausdorff space over $\C$. 

One of the approaches to tackle the abstract Cauchy problem is the theory of $C_{0}$-semigroups. 
The classical theory of $C_{0}$-semigroups on Banach spaces (see e.g.\ \cite{engel_nagel2000} 
and the references therein) has already been extended in several ways.
Beyond the realm of Banach spaces it was extended to equicontinuous $C_{0}$-semigroups on locally convex 
Hausdorff spaces in \cite[Chap.~IX]{albanese2016,komatsu1964,yosida1968}, 
quasi-equicontinuous $C_{0}$-semigroups 
in \cite{albanese2016,babalola1974,choe1985,jacob2015_2,miyadera1959,moore1971a,moore1971b}, 
locally equicontinuous $C_{0}$-semigroups in \cite{dembart1974,komura1968,Ouchi2},  
sequentially (locally) equicontinuous $C_{0}$-semigroups in \cite{federico2020}
and smooth semigroups on convenient algebras in \cite{teichmann2002}.

Besides the extension of the theory of $C_{0}$-semigroups to locally convex Hausdorff spaces the 
continuity assumptions were weakened as well. Bi-continuous semigroups were introduced 
in \cite{kuehnemund2001,kuehnemund2003}, (locally equi-)tight bi-continuous semigroups in \cite{es_sarhir2006}, 
integrable semigroups considered in \cite{kunze2009}, integrated semigroups in \cite{wang1997,xiao1998}, 
distribution semigroups on Banach spaces in \cite{chazarain1971,kunstmann1991,lions1960,melnikova2001,kisynski2007} 
and even on locally convex Hausdorff spaces in \cite{shiraishi1963,shiraishi1964}, and 
(Fourier) hyperfunction semigroups for Banach spaces in \cite{Ouchi1971,Ouchi1} (and \cite{ito1982a,ito1982b}).
We note that most of the classical bi-continuous semigroups are actually quasi-equicontinuous 
$C_{0}$-semigroups with respect to the mixed topology and even quasi-equitight 
by \cite[Theorem 7.4, p.\ 180]{kraaij2016} and \cite[3.17 Theorem, p.\ 13, Section 4]{kruse_schwenninger2022}.

Apart from the theory of semigroups some of the classical methods for initial value problems were transferred 
to the setting in locally convex Hausdorff spaces in \cite{lobanov1994} and the references therein. 

A common problem in the mentioned approaches to the abstract Cauchy problem is the development of 
a suitable notion of a Laplace transform for vector-valued generalised functions. 
In \cite{baeumer1997,baeumer2001,lumer1999} an appropriate (asymptotic) Laplace transform was developed 
for Banach-valued locally integrable functions and applied to abstract Cauchy problems, and 
in \cite{komatsu1988,Kom3,Kom4,Kom5,Kom6} under minimal regularity assumptions 
for Banach-valued hyperfunctions as well as in \cite{langenbruch2011} for Fr\'echet-valued hyperfunctions. 
This was extended in \cite{kruse2021_1} beyond the class of Fr\'echet spaces to a large variety of locally 
convex Hausdorff spaces containing common spaces of distributions. 

We use the asymptotic Laplace transform from \cite{kruse2021_1} to study the abstract Cauchy problem 
for vector-valued hyperfunctions. After recalling the necessary notions and results from \cite{kruse2021_1} 
in \prettyref{sect:Notation}, 
we characterise the uniqueness of the solutions of the abstract Cauchy problem in \prettyref{sect:uniqueness}, 
in particular, we derive necessary and sufficient conditions in \prettyref{thm:unique_prop_equiv}. 
We use these conditions to phrase sufficient conditions for the uniqueness of solutions in terms of asymptotic 
left resolvents in \prettyref{thm:unique_prop_suff_0} and \prettyref{thm:unique_prop_suff}, generalising 
corresponding results from \cite{langenbruch2011} (for general notions of resolvents in locally convex Hausdorff 
spaces see \cite{arikan2003} and \cite{domanskilangenbruch2012}).
In \prettyref{sect:solvability} we turn to the solvability of the abstract Cauchy problem 
for vector-valued hyperfunctions and present necessary and sufficient conditions for the solvability in 
\prettyref{thm:solution_ACP}. Here we use the Laplace transform for vector-valued Laplace hyperfunctions 
from \cite{domanskilangenbruch2010} in combination with the asymptotic Laplace transform for vector-valued 
hyperfunctions from \cite{kruse2021_1}. In \prettyref{thm:solution_ACP_sufficient} we give a sufficient condition 
for the solvability of the abstract Cauchy problem in terms of asymptotic right resolvents. 
Our results on the solvability extend the ones from \cite{langenbruch2011}.

\section{Notation and Preliminaries}
\label{sect:Notation}

We use essentially the same notation and preliminaries as in \cite[Section 2]{kruse2021_1}.
In the following $E$ is always a locally convex Hausdorff space over $\C$ equipped with a directed 
system of seminorms $(p_{\alpha})_{\alpha\in\mathfrak{A}}$, in short, $E$ is a $\C$-lcHs.
If $E$ is a normed space, we often write $\|\cdot\|_{E}$ for the norm on $E$.
We denote by $L(F,E)$ the space of continuous linear maps from 
a $\C$-lcHs $F$ to $E$, write $L(F)\coloneq L(F,F)$, sometimes use the notion $\langle T, f\rangle\coloneq T(f)$, $f\in F$, for 
$T\in L(F,E)$ and the symbol $T^{\operatorname{t}}$ for the dual map of $T$. 
If $E=\C$, we write $F'\coloneq L(F,\C)$ for the dual space of $F$. 
We denote by $L_{b}(F,E)$ the space $L(F,E)$ equipped with the locally convex topology of uniform convergence 
on the bounded subsets of $F$. 

We denote by $\mathcal{O}(\Omega,E)$ the space of $E$-valued holomorphic functions on an open 
set $\Omega\subset\C$ and by $\mathcal{C}^{\infty}(\Omega,E)$ the space of $E$-valued infinitely continuously 
partially differentiable functions on an open set $\Omega\subset\R^{2}=\C$. 
We denote by $\partial^{\beta}f$ the partial derivative of $f\in \mathcal{C}^{\infty}(\Omega, E)$ for a multiindex 
$\beta\in\N_{0}^{2}$.
We denote by $\C_{\re >0}\coloneq\{z\in\C\;|\;\re(z)>0\}$ the right halfplane, 
by $\overline{\R}\coloneq\R\cup\{\pm \infty\}$ the two-point compactification of $\R$ and set 
$\overline{\C}\coloneq\overline{\R}+i\R$. We define the distance of $z\in\C$ to a set $M\subset\C$ w.r.t.\ the Euclidean 
norm $|\cdot|$ via $\d(z,M)\coloneq\inf_{w\in M}|z-w|$ if $M\neq\emptyset$, and $\d(z,M)\coloneq\infty$ if $M=\emptyset$.
For a compact set $K\subset \overline{\R}$ and $c>0$ we define the sets
\begin{align*}
  U_{\frac{1}{c}}(K) 
&\coloneq\phantom{\cup} \{ z \in \C \:| \: \d(z,K\cap\C)< c\} \\
&\phantom{\coloneq}\cup
\begin{cases}
\emptyset &,\; K \subset \R, \\
]\nicefrac{1}{c},\infty[+i ]-c, c[ &,\; \infty \in K, \, -\infty \notin K, \\
]-\infty, -\nicefrac{1}{c}[+i ]-c, c[ &, \; \infty \notin K, \,  -\infty \in K,\\ 
\left(]-\infty, -\nicefrac{1}{c}[\cup ]\nicefrac{1}{c},\infty[\right)+i ]-c, c[ &,\; \infty \in K, \, -\infty \in K,
\end{cases}
\end{align*}
and for $n\in\N$ 
\[
S_{n}(K)\coloneq\left(\C\setminus\overline{U_{n}(K)}\right) \cap \{z\in\C\;|\;|\im(z)|<n\}. 
\]

\begin{defn}[{\cite[3.2 Definition, p.\ 12--13]{ich}}]
Let $E$ be a $\C$-lcHs and $K\subset\overline{{\R}}$ be compact.
\begin{enumerate}
\item [(a)] The space of vector-valued slowly increasing infinitely continuously partially differentiable 
functions outside $K$ is defined as
\[
\mathcal{E}^{exp}(\overline{\C}\setminus K, E)\coloneq \{f\in\mathcal{C}^{\infty}(\C\setminus K,E)
\; | \; \forall\;n\in\N,\,m\in\N_{0},\,\alpha\in\mathfrak{A}:\;\vertiii{f}_{n,m,\alpha,K} < \infty\}
\]
where
\[
\vertiii{f}_{n,m, \alpha,K}\coloneq\sup_{\substack{z\in S_{n}(K)\\ \beta\in\N_{0}^{2}, |\beta|\leq m}}
p_{\alpha}(\partial^{\beta}f(z))
e^{-\frac{1}{n}|\re(z)|}.
\]
 \item [(b)] The space of vector-valued slowly increasing holomorphic functions outside $K$ is defined as
\[
\mathcal{O}^{exp}(\overline{\C}\setminus K,E)\coloneq \{f\in\mathcal{O}(\C\setminus K,E)
\; | \; \forall\;n\in\N,\,\alpha\in\mathfrak{A}:\; \vertiii{f}_{n,\alpha,K} < \infty \}
\]
where
\[
\vertiii{f}_{n, \alpha,K}\coloneq\sup_{z \in S_{n}(K)}p_{\alpha}(f(z))e^{-\frac{1}{n}|\re(z)|}.
\]
Furthermore, we set
\[
bv_{K}(E)\coloneq\mathcal{O}^{exp}(\overline{\C}\setminus K, E)/\mathcal{O}^{exp}(\overline{\C}, E).
\]
\end{enumerate}
\end{defn}

We note that $S_{1}(\overline{\R})=\emptyset$ and 
$\vertiii{f}_{1,m, \alpha,\overline{\R}}=-\infty=\vertiii{f}_{1, \alpha,\overline{\R}}$ 
for any $f\colon\C\setminus\R\to E$, $m\in\N_{0}$ and $\alpha\in\mathfrak{A}$.
Other common symbols for the spaces $\mathcal{E}^{exp}(\overline{\C}\setminus K,E)$ resp.\ 
$\mathcal{O}^{exp}(\overline{\C}\setminus K,E)$ are $\widetilde{\mathcal{E}}(\overline{\C}\setminus K,E)$ resp.\ 
$\widetilde{\mathcal{O}}(\overline{\C}\setminus K,E)$ (see \cite[1.2 Definition, p.\ 5]{J}). 

\begin{defn}[{(strictly) admissible, \cite[p.\ 55]{ich}}]
A $\C$-lcHs $E$ is called \emph{admissible}, if the Cauchy-Riemann operator
\[
\overline{\partial}\colon \mathcal{E}^{exp}(\overline{\C}\setminus K,E)\to \mathcal{E}^{exp}(\overline{\C}\setminus K,E)
\]
is surjective for any compact set $K\subset\overline{\R}$. $E$ is called \emph{strictly admissible} 
if $E$ is admissible and if, in addition,
\[
\overline{\partial}\colon \mathcal{C}^{\infty}(\Omega,E)\to\mathcal{C}^{\infty}(\Omega,E)
\]
is surjective for any open set $\Omega\subset\C$.
\end{defn}

If $E$ is strictly admissible and sequentially complete, then the sheaf of $E$-valued Fourier 
hyperfunctions is flabby and can be represented by boundary values of exponentially slowly increasing 
holomorphic functions (see \cite[Theorem 5.9, p.\ 33]{kruse2019_5}). 
In particular, its subsheaf of $E$-valued hyperfunctions 
is flabby under this condition as well. Moreover, we may regard $bv_{K}(E)$ as the space of 
$E$-valued Fourier hyperfunctions with support in $K\subset\overline{\R}$ under this condition 
by \cite[5.11 Lemma, p.\ 44]{kruse2019_5}.

\begin{thm}[{\cite[5.25 Theorem, p.\ 98]{ich}}]\label{thm:examples_strictly_admiss}
If
\begin{enumerate}
\item [(a)] $E$ is a $\C$-Fr\'echet space, or if
\item [(b)] $E\coloneq F_{b}'$ where $F$ is a $\C$-Fr\'echet space satisfying $(DN)$, or if
\item [(c)] $E$ is a complex ultrabornological PLS-space satisfying $(PA)$, 
\end{enumerate}
then $E$ is strictly admissible.
\end{thm}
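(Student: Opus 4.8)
\emph{The plan} is to unwind the two surjectivity requirements in the definition of (strict) admissibility and to transfer them from the scalar case to the $E$-valued case by a tensor/$\varepsilon$-product argument, with the conditions $(DN)$ and $(PA)$ entering exactly as the homological hypotheses that keep the relevant sequence exact.

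First I would treat the scalar case $E=\C$. For the Cauchy--Riemann operator on $\mathcal{C}^{\infty}(\Omega)$ the solvability of $\overline{\partial}u=f$ is the classical inhomogeneous Cauchy--Riemann equation (Dolbeault--Grothendieck, H\"ormander $L^{2}$-estimates), so $\overline{\partial}\colon\mathcal{C}^{\infty}(\Omega)\to\mathcal{C}^{\infty}(\Omega)$ is surjective with kernel $\mathcal{O}(\Omega)$. For the weighted space $\mathcal{E}^{exp}(\overline{\C}\setminus K)$ the same has to be done with control of the exponential weight $e^{-\frac{1}{n}|\re(z)|}$ on the unbounded strips $S_{n}(K)$ near $\pm\infty$; this is the Fourier-hyperfunction solvability of $\overline{\partial}$ with bounds and yields a topologically exact sequence with kernel $\mathcal{O}^{exp}(\overline{\C}\setminus K)$. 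In both cases all three spaces are nuclear Fr\'echet spaces, and the kernels $\mathcal{O}(\Omega)$, $\mathcal{O}^{exp}(\overline{\C}\setminus K)$ are spaces of holomorphic functions, for which one verifies property $(\Omega)$.

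Second I would pass to $E$-valued functions. Nuclearity of the scalar spaces gives, for sequentially complete $E$, the identifications $\mathcal{C}^{\infty}(\Omega,E)\cong\mathcal{C}^{\infty}(\Omega)\,\varepsilon\,E$ and $\mathcal{E}^{exp}(\overline{\C}\setminus K,E)\cong\mathcal{E}^{exp}(\overline{\C}\setminus K)\,\varepsilon\,E$, under which $\overline{\partial}$ corresponds to $\overline{\partial}\,\varepsilon\,\mathrm{id}_{E}$. Hence surjectivity of $\overline{\partial}$ on the $E$-valued space is equivalent to exactness of the scalar short exact sequence after applying the (left exact) functor $(-)\,\varepsilon\,E$, i.e.\ to the vanishing of the obstruction $\operatorname{Proj}^{1}$ of the associated projective spectrum $(N_{n}\,\varepsilon\,E)_{n}$, where $N$ denotes the kernel. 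Since $\overline{\partial}$ admits no continuous linear right inverse on the scalar space in general, the scalar sequence does not split, so this is a genuine condition on $E$.

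Finally I would settle the three cases by verifying this vanishing. For (a) and Fr\'echet $E$ the tensored spectrum consists of Fr\'echet spaces and, because the kernel carries $(\Omega)$, the obstruction vanishes for every such $E$ by Vogt's theory of the functor $\operatorname{Proj}^{1}$ (equivalently, by the solvability of $\overline{\partial}$ with a Fr\'echet-space parameter); this already yields both surjectivities and hence strict admissibility. For (b) and $E=F_{b}'$ the lifting problem dualises to one for $F$, and $(DN)$ is precisely Vogt's condition making the corresponding $\operatorname{Ext}^{1}$-obstruction against the $(\Omega)$-kernel vanish. For (c) and an ultrabornological PLS-space $E$ one works with the $\operatorname{Proj}^{1}$-calculus for PLS-spectra of Bonet--Doma\'nski, where $(PA)$ is exactly the property forcing $\operatorname{Proj}^{1}$ to vanish for the relevant spectrum. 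I expect the main obstacle to be twofold: establishing the scalar solvability with the correct exponential growth near $\pm\infty$ (the weighted $\overline{\partial}$-estimates on $S_{n}(K)$ rather than mere local solvability), and checking the homological hypotheses---property $(\Omega)$ of the holomorphic kernels together with the splitting conditions $(DN)$ and $(PA)$---in the precise form needed for $\operatorname{Proj}^{1}$ to vanish.
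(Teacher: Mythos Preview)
The paper does not contain a proof of this theorem: it is quoted verbatim from the cited source \cite[5.25 Theorem]{ich} and used as a black box. So there is nothing in the present paper to compare your proposal against.

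That said, your outline is the expected one and matches how such results are obtained in the literature on vector-valued Fourier hyperfunctions (in particular in the cited source and in \cite{D/L,kruse2019_5}). The scheme---scalar solvability of $\overline{\partial}$ with exponential weights, $\varepsilon$-product representation of the $E$-valued spaces, then Vogt's splitting theory via $(\Omega)$ for the kernel paired with $(DN)$ on the Fr\'echet predual in (b), respectively the Bonet--Doma\'nski $\operatorname{Proj}^{1}$-calculus with $(PA)$ in (c)---is precisely the machinery behind \prettyref{thm:examples_strictly_admiss}. Your identification of the two genuine obstacles (the weighted $\overline{\partial}$-estimates on the strips $S_{n}(K)$, and verifying $(\Omega)$ for $\mathcal{O}^{exp}(\overline{\C}\setminus K)$) is accurate; these are exactly the technical cores worked out in \cite{ich} and \cite{kruse2019_5}. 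One caveat: for (b) and (c) the space $E$ is no longer Fr\'echet, so the $\varepsilon$-product identification $\mathcal{E}^{exp}(\overline{\C}\setminus K,E)\cong \mathcal{E}^{exp}(\overline{\C}\setminus K)\,\varepsilon\,E$ and the right-exactness argument require more care than ``$E$ sequentially complete''---one needs the appropriate compatibility (e.g.\ that the scalar space has the approximation property and that $E$ has suitable completeness/webbed structure) to justify both the tensor representation and the $\operatorname{Proj}^{1}$ computation in the non-Fr\'echet setting.
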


The definitions of the topological invariants $(DN)$ and $(PA)$ are given in 
\cite[Chap.\ 29, Definition, p.\ 359]{meisevogt1997} and \cite[Section 4, Eq.\ (24), p.\ 577]{Dom1}, respectively. 
Besides every $\C$-Fr\'echet space, the theorem above covers the space $E=\mathcal{S}(\R^{d})_{b}'$ 
of tempered distributions, the space $\mathcal{D}(V)_{b}'$ of distributions 
and the space $\mathcal{D}_{(\omega)}(V)_{b}'$ of ultradistributions of Beurling type 
and many more spaces given in \cite{Dom1}, \cite[Corollary 4.8, p.\ 1116]{D/L}, 
\cite[Example 4.4, p.\ 14--15]{kruse2019_5} and \cite{vogt1983}.

\begin{defn}[{\cite[7.1 Definition, p.\ 106]{kruse2021_1}}]
Let $E$ be a $\C$-lcHs and $a\in\{0,\infty\}$. We define the space
\[
\mathcal{LO}_{[a,\infty]}(E)\coloneq\{f\in\mathcal{O}(\C_{\re >0},E)\;|\;\forall\;k\in\N,\,\alpha\in\mathfrak{A}:\;
\|f\|_{k,\alpha,[a,\infty]}<\infty\}
\]
where
\[
 \|f\|_{k,\alpha,[0,\infty]}\coloneq\sup_{\re(z)\geq \frac{1}{k}}p_{\alpha}(f(z))e^{-\frac{1}{k}|z|}
\]
resp. 
\[
 \|f\|_{k,\alpha,\{\infty\}}\coloneq\sup_{\re(z)\geq \frac{1}{k}}p_{\alpha}(f(z))e^{-\frac{1}{k}|z|+k\re(z)}.
\]
We omit the index $\alpha$ of the seminorms if $E$ is a normed space, and write 
$\mathcal{LO}_{[a,\infty]}\coloneq\mathcal{LO}_{[a,\infty]}(\C)$.
\end{defn}

Let $E$ be a sequentially complete $\C$-lcHs, $K\coloneq [0,\infty]$ or $K\coloneq\{\infty\}$ 
and equip $bv_{K}(E)$ with its usual quotient topology, which is Hausdorff locally 
convex by \cite[Remark 14, p\ 22]{kruse2019_2}. 
By \cite[Theorem 7.2 (ii), p.\ 106]{kruse2021_1} the Laplace transform 
\[
 \mathcal{L}\colon bv_{K}(E) \to \mathcal{LO}_{K}(E),\;
 \mathcal{L}([F])(\zeta)\coloneq\int_{\gamma_{K}}{F(z)e^{-z\zeta}\d z},
\]
where $\gamma_{K}$ is the path along the boundary of $U_{\nicefrac{1}{c}}(K)$ with clockwise orientation (see \prettyref{fig:path_int}), does not depend on the choice of $c>0$ and is a topological isomorphism.
\begin{center}
\begin{minipage}{\linewidth}
\centering
\begin{tikzpicture}
\draw[very thick]  (0,1.5) arc (90:270:15mm);
\draw[very thick] (2,1.5) -- (4,1.5) node [above=0.1pt] {$\gamma_{[0,\infty]}$};
\draw[->,very thick] (0,1.5) -- (2.2,1.5);
\draw[very thick] (2,-1.5) -- (0,-1.5);
\draw[->,very thick] (4,-1.5) -- (1.8,-1.5);
\draw(-1.5 cm,1pt)--(-1.5cm,-1pt)node[anchor=north east] {$-c$};
\draw(1pt,1.5 cm)--(-1pt,1.5 cm)node[anchor=south east] {$c$};
\draw(1pt,-1.5 cm)--(-1pt,-1.5 cm)node[anchor=north east] {$-c$};
\draw[->] (-4,0) -- (4,0) node[right] {$\re(z)$} coordinate (x axis);
\draw[dashed,very thick] (0,0)--(4,0);
\draw[->] (0,-2) -- (0,2) node[above] {$\im(z)$} coordinate (y axis);
\end{tikzpicture}
\end{minipage}
\captionsetup{type=figure}
\caption{Path $\gamma_{[0,\infty]}$ with $c>0$ (cf.\ \cite[Figure 1.2, p.\ 62]{kruse2021_1})}
\label{fig:path_int}
\end{center}
Let $E$ be an admissible $\C$-lcHs. Then the canonical 
(restriction) map 
\[
\mathcal{R}_{[0,\infty[}\colon \mathcal{O}^{exp}(\overline{\C}\setminus [0,\infty],E)/
\mathcal{O}^{exp}(\overline{\C}\setminus\{\infty\},E)
\to \mathcal{B}([0,\infty[,E),\;[F]\mapsto [F],
\]
is a linear isomorphism by \cite[Theorem 5.1, p.\ 96]{kruse2021_1} where 
\[
\mathcal{B}([0,\infty[,E)\coloneq\mathcal{O}(\C\setminus [0,\infty[,E)/\mathcal{O}(\C,E)
\]
is the space of hyperfunctions with values in $E$ and support in $[0,\infty[$. 
The combination of both results leads to the following theorem.

\begin{thm}[{\cite[7.4 Theorem, p.\ 106]{kruse2021_1}}]\label{thm:asymp_laplace}
Let $E$ be an admissible sequentially complete $\C$-lcHs.
Then the asymptotic Laplace transform 
\[
\mathcal{L}^{\mathcal{B}}\colon 
\mathcal{B}([0,\infty[,E)\to 
\mathcal{LO}_{[0,\infty]}(E)/\mathcal{LO}_{\{\infty\}}(E),\;
\mathcal{L}^{\mathcal{B}}(f)\coloneq [(\mathcal{L}\circ \mathcal{R}_{[0,\infty[}^{-1})(f)],
\]
is a linear isomorphism.
\end{thm}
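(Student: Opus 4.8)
The plan is to exhibit $\mathcal{L}^{\mathcal{B}}$ as a composition of three linear isomorphisms, two of which are already at hand from \cite{kruse2021_1}: the inverse of the restriction isomorphism $\mathcal{R}_{[0,\infty[}$ and the Laplace transform $\mathcal{L}$ on the spaces $bv_{K}(E)$. The third ingredient is purely algebraic. First I would record the chain of inclusions
\[
\mathcal{O}^{exp}(\overline{\C},E)\subset\mathcal{O}^{exp}(\overline{\C}\setminus\{\infty\},E)\subset\mathcal{O}^{exp}(\overline{\C}\setminus[0,\infty],E),
\]
which holds because enlarging the excluded compact set enlarges the domain of holomorphy ($\C\setminus K_{2}\subset\C\setminus K_{1}$ for $K_{1}\subset K_{2}$) while $S_{n}(K_{2})\subset S_{n}(K_{1})$ makes the defining seminorm conditions weaker, hence inherited.

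Next I would invoke the third isomorphism theorem for this chain. As $bv_{[0,\infty]}(E)$ and $bv_{\{\infty\}}(E)$ arise by dividing the outer, resp.\ middle, space by the common subspace $\mathcal{O}^{exp}(\overline{\C},E)$, one obtains a canonical linear isomorphism
\[
\frac{bv_{[0,\infty]}(E)}{bv_{\{\infty\}}(E)}\;\cong\;\frac{\mathcal{O}^{exp}(\overline{\C}\setminus[0,\infty],E)}{\mathcal{O}^{exp}(\overline{\C}\setminus\{\infty\},E)},
\]
whose right-hand side is exactly the domain of $\mathcal{R}_{[0,\infty[}$. Composing with $\mathcal{R}_{[0,\infty[}^{-1}$ from \cite[Theorem 5.1, p.\ 96]{kruse2021_1} gives a linear isomorphism $\mathcal{B}([0,\infty[,E)\cong bv_{[0,\infty]}(E)/bv_{\{\infty\}}(E)$.

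It remains to push the Laplace transform down to these quotients. By \cite[Theorem 7.2 (ii), p.\ 106]{kruse2021_1}, $\mathcal{L}$ is a topological isomorphism from $bv_{[0,\infty]}(E)$ onto $\mathcal{LO}_{[0,\infty]}(E)$ and from $bv_{\{\infty\}}(E)$ onto $\mathcal{LO}_{\{\infty\}}(E)$, and $\mathcal{LO}_{\{\infty\}}(E)\subset\mathcal{LO}_{[0,\infty]}(E)$ because $\|\cdot\|_{k,\alpha,\{\infty\}}\geq\|\cdot\|_{k,\alpha,[0,\infty]}$ on $\{z\;|\;\re(z)\geq\nicefrac{1}{k}\}$. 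The crucial point, which I expect to be the main obstacle, is the compatibility of the two transforms with the inclusion $bv_{\{\infty\}}(E)\hookrightarrow bv_{[0,\infty]}(E)$: for a representative $F\in\mathcal{O}^{exp}(\overline{\C}\setminus\{\infty\},E)$ one must show
\[
\int_{\gamma_{[0,\infty]}}F(z)e^{-z\zeta}\,\d z=\int_{\gamma_{\{\infty\}}}F(z)e^{-z\zeta}\,\d z.
\]
Such an $F$ is holomorphic on all of $\C$, and (choosing the same $c>0$ for both paths) the two contours coincide outside a bounded neighbourhood of the origin and there bound a region on which $z\mapsto F(z)e^{-z\zeta}$ is holomorphic; each integral converges since $e^{-z\zeta}$ decays exponentially as $\re(z)\to\infty$ for $\re(\zeta)>0$ against the subexponential growth of $F$. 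Cauchy's integral theorem then yields the equality, which identifies the restriction of $\mathcal{L}$ to $bv_{\{\infty\}}(E)$ with the $\{\infty\}$-transform and in particular gives $\mathcal{L}(bv_{\{\infty\}}(E))=\mathcal{LO}_{\{\infty\}}(E)$.

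With this compatibility, $\mathcal{L}$ descends to a linear isomorphism $\overline{\mathcal{L}}\colon bv_{[0,\infty]}(E)/bv_{\{\infty\}}(E)\to\mathcal{LO}_{[0,\infty]}(E)/\mathcal{LO}_{\{\infty\}}(E)$. Composing $\mathcal{R}_{[0,\infty[}^{-1}$, the third-isomorphism identification, and $\overline{\mathcal{L}}$, and tracing an element $f$ through representatives, one checks that the result is precisely $[(\mathcal{L}\circ\mathcal{R}_{[0,\infty[}^{-1})(f)]$; the compatibility step is exactly what makes this independent of the chosen representatives, so $\mathcal{L}^{\mathcal{B}}$ is well defined. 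Being a composition of linear isomorphisms, it is itself a linear isomorphism, which is the assertion.
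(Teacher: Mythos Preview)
The paper does not actually give a proof of this statement; it is cited verbatim from \cite[7.4 Theorem, p.\ 106]{kruse2021_1}, and the only hint the paper offers is the sentence immediately preceding it: ``The combination of both results leads to the following theorem,'' referring to the Laplace isomorphism $\mathcal{L}\colon bv_{K}(E)\to\mathcal{LO}_{K}(E)$ and the restriction isomorphism $\mathcal{R}_{[0,\infty[}$. Your proposal is precisely a fleshing-out of that combination---the third-isomorphism-theorem identification and the Cauchy-theorem compatibility of the two contour integrals are exactly the details needed to descend $\mathcal{L}$ to the quotient---so your approach matches what the paper indicates, and the argument is correct.
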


\section{Uniqueness of solutions of the ACP}
\label{sect:uniqueness}

In this section we apply our results on the asymptotic Laplace transform of hyperfunctions 
with support in $[0,\infty[$ to the abstract Cauchy problem for hyperfunctions with values in an admissible 
(sequentially) complete $\C$-lcHs.
We start with a generalisation of an abstract Cauchy problem for hyperfunctions given in 
\cite[p.\ 60--61]{langenbruch2011}.
Let $(E,(p_{\alpha})_{\alpha\in\mathfrak{A}})$ be a sequentially complete $\C$-lcHs. We call 
\begin{equation}\label{eq:ACP}
x'(t)=Ax(t),\quad t>0,\quad
x(0)=x_{0}\in E,
\end{equation}
an \emph{abstract Cauchy problem (ACP)} where 
\[
A\colon F\coloneq D(A)\subset E\to E
\]
is a sequentially closed linear operator with domain $F\coloneq D(A)$. 
Then $F$ is a sequentially complete $\C$-lcHs when equipped 
with the graph topology $\tau_{A}$ given by the seminorms 
$(p_{\alpha,A}\coloneq p_{\alpha}+p_{\alpha}(A\cdot))_{\alpha\in\mathfrak{A}}$, and $A\colon F_{A}\coloneq (F,\tau_{A})\to E$ 
is continuous. 

\begin{rem}
\begin{enumerate}
\item[(a)] If $E$ is a $\C$-Fr\'echet space and $A$ and $F$ are as above, then $F_{A}=(F,\tau_{A})$ 
is also a Fr\'echet space and thus (strictly) admissibile by \prettyref{thm:examples_strictly_admiss}.
\item[(b)] If $E$ is a (strictly) admissible space, $F=E$ and $A\colon F\to E$ continuous, 
then $F_{A}=E$ as locally convex spaces and so $F_{A}$ is (strictly) admissible. 
\end{enumerate}
\end{rem}

An $F$-valued hyperfunction $[u]\in\mathcal{B}([0,\infty[,F_{A})$ is called a \emph{solution of the ACP} \eqref{eq:ACP} 
(in the sense of hyperfunctions) if 
\begin{equation}\label{eq:ACP_hyp}
\frac{d}{dt}[u]-A[u]=x_{0}\otimes\delta_{0}
\end{equation}
where $\delta_{0}\coloneq [z\mapsto -\tfrac{1}{2\pi i z}]$ is the Dirac hyperfunction 
(see \cite[4.11 Example, p.\ 96]{kruse2021_1}), 
$x_{0}\otimes\delta_{0}\coloneq x_{0}\delta_{0}$, $\tfrac{d}{dt}[u]\coloneq [\tfrac{d}{dz}u]$ and $A[u]\coloneq [z\mapsto Au(z)]$. 

We say that the ACP \eqref{eq:ACP} has the \emph{uniqueness property} (in the sense of hyperfunctions) 
if $[u]=0$ is the only solution of \eqref{eq:ACP_hyp} for $x_{0}=0$. 
Our next theorem generalises \cite[Theorem 7.1, p.\ 61]{langenbruch2011} and we note that its proof 
essentially remains the same.

\begin{thm}\label{thm:unique_prop_equiv}
Let $E$ be an admissible sequentially complete $\C$-lcHs and $A\colon F\coloneq D(A)\subset E\to E$ a sequentially closed 
linear operator with admissible $F_{A}=(F,\tau_{A})$. Then the following statements are equivalent:
\begin{enumerate}
\item[(a)] The ACP \eqref{eq:ACP} has the uniqueness property (in the sense of hyperfunctions).
\item[(b)] If $h\in\mathcal{LO}_{[0,\infty]}(F_{A})$ and $(z-A)h\in\mathcal{LO}_{\{\infty\}}(E)$, 
then $h\in\mathcal{LO}_{\{\infty\}}(F_{A})$.
\item[(c)] If $h\in\mathcal{LO}_{[0,\infty]}(F_{A})$ and $(z-A)h\in\mathcal{LO}_{\{\infty\}}(E)$, 
then $\{h(t)e^{nt}\;|\;t\geq \varepsilon\}$ is weakly bounded in $F_{A}$ for any $n\in\N$ and any (some) $\varepsilon>0$.
\end{enumerate}
\end{thm}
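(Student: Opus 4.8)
The plan is to establish the three equivalences by transporting the uniqueness statement, which lives on the hyperfunction side, over to the Laplace image via the isomorphism $\mathcal{L}^{\mathcal{B}}$ of \prettyref{thm:asymp_laplace}, and then to reinterpret the resulting growth condition in the weak form required by (c). First I would unwind the meaning of a solution $[u]\in\mathcal{B}([0,\infty[,F_{A})$ of \eqref{eq:ACP_hyp} for the homogeneous problem $x_{0}=0$. Applying $\mathcal{L}^{\mathcal{B}}$ for the space $F_{A}$ and using that it intertwines $\tfrac{d}{dt}$ with multiplication by $\zeta$ (and that $A$ acts pointwise on representatives), the equation $\tfrac{d}{dt}[u]-A[u]=0$ should translate into the statement that a representative $h\in\mathcal{LO}_{[0,\infty]}(F_{A})$ of $\mathcal{L}^{\mathcal{B}}([u])$ satisfies $(\zeta-A)h\in\mathcal{LO}_{\{\infty\}}(E)$, the right-hand side being the subspace that is quotiented out in the target. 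Since $\mathcal{L}^{\mathcal{B}}$ is a linear isomorphism, $[u]=0$ precisely when its Laplace image is $0$ in the quotient $\mathcal{LO}_{[0,\infty]}(F_{A})/\mathcal{LO}_{\{\infty\}}(F_{A})$, i.e.\ when $h\in\mathcal{LO}_{\{\infty\}}(F_{A})$. This gives the equivalence of (a) and (b) directly.

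For the implication $(\mathrm{b})\Rightarrow(\mathrm{c})$ I would argue that membership in $\mathcal{LO}_{\{\infty\}}(F_{A})$, by the defining seminorm
\[
\|h\|_{k,\alpha,\{\infty\}}=\sup_{\re(\zeta)\geq \frac{1}{k}}p_{\alpha,A}(h(\zeta))e^{-\frac{1}{k}|\zeta|+k\re(\zeta)}<\infty,
\]
forces $p_{\alpha,A}(h(t))e^{kt}$ to stay bounded as $t\to\infty$ along the positive real axis for every $k$, so in particular $\{h(t)e^{nt}\;|\;t\geq\varepsilon\}$ is (even strongly) bounded in $F_{A}$ for each fixed $n$; boundedness implies weak boundedness, which is what (c) asserts. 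The genuine content, and the step I expect to be the main obstacle, is the reverse direction $(\mathrm{c})\Rightarrow(\mathrm{b})$: here one must upgrade weak boundedness of $\{h(t)e^{nt}\;|\;t\geq\varepsilon\}$ on the real axis for all $n$ to the full holomorphic estimate defining $\mathcal{LO}_{\{\infty\}}(F_{A})$, which requires control of $h$ throughout the half-plane $\re(\zeta)\geq 1/k$ and not merely on a ray.

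The device to overcome this is the standard one: combine a Phragmén--Lindelöf type argument with the a priori membership $h\in\mathcal{LO}_{[0,\infty]}(F_{A})$, which already provides the exponential bound $p_{\alpha,A}(h(\zeta))\leq C e^{\frac{1}{k}|\zeta|}$ on the half-plane. Weak boundedness of $\{h(t)e^{nt}\}$ says that for each continuous linear functional $\varphi$ on $F_{A}$ the scalar holomorphic function $\varphi\circ h$ decays like $e^{-nt}$ on the real axis for every $n$; interpolating this decay on the axis against the half-plane growth from $\mathcal{LO}_{[0,\infty]}$ via Phragmén--Lindelöf yields, for each $\varphi$, a decay estimate $|\varphi(h(\zeta))|e^{k\re(\zeta)}\leq C_{\varphi,k}e^{\frac{1}{k}|\zeta|}$ on the half-plane, i.e.\ the scalar function $\zeta\mapsto\varphi(h(\zeta))$ lies in $\mathcal{LO}_{\{\infty\}}$. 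One then passes from this scalar (weak) conclusion back to the vector-valued statement $h\in\mathcal{LO}_{\{\infty\}}(F_{A})$; since the Laplace image already lives in the correct holomorphic space and $F_{A}$ is sequentially complete, the weak-scalar estimates together with a uniform boundedness principle (or the closed-graph/weak-strong boundedness equivalence for the bounded sets involved) promote the pointwise weak bounds to the required seminorm bounds, closing the loop back to (b) and hence to (a). As the paper notes, this argument parallels the one in \cite[Theorem 7.1, p.\ 61]{langenbruch2011}, with the Fr\'echet-space hypotheses there replaced throughout by the admissibility and sequential completeness of $E$ and $F_{A}$.
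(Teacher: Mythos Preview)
Your proposal is correct and follows essentially the same route as the paper: the cycle $(\mathrm{a})\Rightarrow(\mathrm{b})\Rightarrow(\mathrm{c})$ via the isomorphism $\mathcal{L}^{\mathcal{B}}$ and the defining seminorms, and the nontrivial return step via Phragm\'en--Lindel\"of on $\varphi\circ h$ for each $\varphi\in F_{A}'$ followed by a weak--strong principle. The only point the paper makes more explicit than you do is that this last upgrade from $\varphi\circ h\in\mathcal{LO}_{\{\infty\}}$ for all $\varphi$ to $h\in\mathcal{LO}_{\{\infty\}}(F_{A})$ is not a bare uniform boundedness argument but rests on the fact that $\mathcal{LO}_{\{\infty\}}$ is a nuclear Fr\'echet space (so that \cite[3.20 Corollary c)]{kruse2018_3} applies with the sequential completeness of $F_{A}$).
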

\begin{proof}
(a) $\Rightarrow$ (b): Due to \prettyref{thm:asymp_laplace} there is 
$[u]\in\mathcal{B}([0,\infty[,F_{A})$ such that 
$[h]=\mathcal{L}^{\mathcal{B}}([u])\in\mathcal{LO}_{[0,\infty]}(F_{A})/\mathcal{LO}_{\{\infty\}}(F_{A})$. 
By \cite[7.10 Proposition, p.\ 108]{kruse2021_1} and our assumption we have
\[
\mathcal{L}^{\mathcal{B}}\Bigl(\frac{d}{dt}[u]-A[u]\Bigr)=(z-A)\mathcal{L}^{\mathcal{B}}([u])=(z-A)[h]=0.
\]
From \prettyref{thm:asymp_laplace} and (a) we deduce that $[u]=0$, which implies $[h]=0$ and thus 
$h\in\mathcal{LO}_{\{\infty\}}(F_{A})$.

(b) $\Rightarrow$ (c): This follows from $h\in\mathcal{LO}_{\{\infty\}}(F_{A})$ by (b) and the definition of 
the space $\mathcal{LO}_{\{\infty\}}(F_{A})$.

(c) $\Rightarrow$ (a): Let $[u]\in\mathcal{B}([0,\infty[,F_{A})$ such that $\tfrac{d}{dt}[u]-A[u]=0$. 
Then $[h]\coloneq\mathcal{L}^{\mathcal{B}}([u])$ satisfies 
\[
0=\mathcal{L}^{\mathcal{B}}\Bigl(\frac{d}{dt}[u]-A[u]\Bigr)=(z-A)[h]
\]
and thus $(z-A)h\in\mathcal{LO}_{\{\infty\}}(E)$. 
Next, we show that $y \circ h\in\mathcal{LO}_{\{\infty\}}$ for any $y\in F_{A}'$
by the Phragm\'en--Lindel\"of theorem. Let $k\in\N$ and set 
$S\coloneq\{z\in\C\;|\;-\tfrac{\pi}{4}<\operatorname{arg}(z)<\tfrac{\pi}{4}\}$ 
and $S_{0}\coloneq\{z\in\C\;|\;\re(z)>\tfrac{1}{k},\,\im(z)>0\}$. 
We define the homeomorphism $\theta\colon \overline{S}\to\overline{S}_{0}$ by 
$\theta(z)\coloneq e^{i\tfrac{\pi}{4}}z+\tfrac{1}{k}$ and the function
\[
F_{0}\colon \overline{S}\to \C,\;F_{0}(z)\coloneq (y\circ h)(\theta(z))e^{(k+\frac{i}{k})\theta(z)}.
\] 
Using $y\circ h\in\mathcal{LO}_{[0,\infty]}$, we have for every $z\in\overline{S}$ that
\begin{align*}
 |F_{0}(z)|
&=|(y\circ h)(\theta(z))|e^{k\re(\theta(z))-\frac{1}{k}\im(\theta(z))}\\
&\leq |(y\circ h)(\theta(z))|e^{-\frac{1}{k}|\theta(z)|+(k+\frac{1}{k})|\re(\theta(z))|}\\
&\leq |y\circ h|_{k,[0,\infty]}e^{(k+\frac{1}{k})|\theta(z)|}
 \leq e^{1+\frac{1}{k^{2}}}|y\circ h|_{k,[0,\infty]}e^{(k+\frac{1}{k})|z|}.
\end{align*}
If $\arg(z)=-\tfrac{\pi}{4}$, then $\theta(z)=|z|+\tfrac{1}{k}$ and by part (c) there is $\varepsilon>0$ such that 
with $\varepsilon_{k}\coloneq\max(0,\varepsilon-\frac{1}{k})$ we get
\begin{flalign*}
&\hspace{0.35cm} |F_{0}(z)|\\
&=|(y\circ h)(\theta(z))|e^{k\theta(z)}\\
&\leq e^{k\varepsilon_{k}+1}\max_{\substack{\arg(w)=-\frac{\pi}{4} \\ |w|\leq \varepsilon_{k}}}|(y\circ h)(\theta(w))|
 +\sup_{\substack{\arg(w)=-\frac{\pi}{4} \\ |w|\geq \varepsilon_{k}}}|(y\circ h)(\theta(w))|e^{k\theta(w)}=:C_{0}<\infty
\end{flalign*}
where we use the continuity of $y\circ h\circ\theta$ as well. 
If $\arg(z)=\tfrac{\pi}{4}$, then $\theta(z)=i|z|+\tfrac{1}{k}$ and we get
\begin{align*}
 |F_{0}(z)|
&=|(y\circ h)(\theta(z))|e^{1-\frac{1}{k}|z|}
 \leq e^{1+\frac{1}{k^{2}}}|(y\circ h)(\theta(z))|e^{-\frac{1}{k}|\theta(z)|}\\
&\leq e^{1+\frac{1}{k^{2}}}|y\circ h|_{k,[0,\infty]}=:C_{1}<\infty.
\end{align*}
Due to the Phragm\'en--Lindel\"of theorem \cite[Theorem 3.4, p.\ 124]{stein2003} (applied to 
$F(z)\coloneq\tfrac{1}{\max(C_{0},C_{1})}F_{0}(z)$) we obtain 
\[
|F_{0}(z)|\leq \max(C_{0},C_{1})=:C_{2},\;z\in\overline{S}, 
\]
and hence
\[
 |(y\circ h)(\theta(z))|
\leq C_{2} e^{-k\re(\theta(z))+\frac{1}{k}\im(\theta(z))}
\leq C_{2} e^{-k|\re(\theta(z))|+\frac{1}{k}|\theta(z)|},\;z\in\overline{S},
\]
which implies 
\[
 \sup_{z\in\overline{S}_{0}} |(y\circ h)(z)|e^{-\frac{1}{k}|z|+k|\re(z)|}\leq C_{2}<\infty.
\]
Similarly, we get  
\[
 \sup_{z\in\overline{S}_{1}} |(y\circ h)(z)|e^{-\frac{1}{k}|z|+k|\re(z)|}<\infty
\]
for $S_{1}\coloneq\{z\in\C\;|\;\re(z)>\tfrac{1}{k},\,\im(z)<0\}$ by choosing 
$\theta_{1}\colon \overline{S}\to\overline{S}_{1}$, $\theta_{1}(z)\coloneq e^{-i\tfrac{\pi}{4}}z+\tfrac{1}{k}$, and
\[
F_{1}\colon \overline{S}\to \C,\;F_{1}(z)\coloneq(y\circ h)(\theta_{1}(z))e^{(k-\frac{i}{k})\theta_{1}(z)}.
\] 
We conclude that $y\circ h\in\mathcal{LO}_{\{\infty\}}$. 
The weak-strong principle \cite[3.20 Corollary c), p.\ 14]{kruse2018_3}
yields $h\in\mathcal{LO}_{\{\infty\}}(F_{A})$ since $F_{A}=(F,\tau_{A})$ is sequentially complete and 
$\mathcal{LO}_{\{\infty\}}$ a nuclear Fr\'echet space by \cite[7.3 Proposition, p.\ 106]{kruse2021_1}. 
Hence $[u]=0$ by \prettyref{thm:asymp_laplace}. 
\end{proof}

Now, we generalise Langenbruch's sufficient criterion 
\cite[Theorem 7.2, p.\ 62]{langenbruch2011} for the uniqueness property, which itself is a generalisation of 
Lyubich's uniqueness theorem \cite[Theorem 9.2, p.\ 40]{lyubich1966}. For this purpose 
we adapt the notion of an asymptotic left resolvent given by Langenbruch \cite[p.\ 62]{langenbruch2011} 
(for general notions of resolvents in locally convex spaces see \cite{arikan2003} and \cite{domanskilangenbruch2012}).
Let $A\colon F\coloneq D(A)\subset E\to E$ be a sequentially closed linear operator.
We denote by $E_{\alpha}\coloneq(E/\ker p_{\alpha})^{\;\widehat{}}$ and $F_{\alpha}\coloneq(F/\ker p_{\alpha,A})^{\;\widehat{}}$ 
the canonical local Banach spaces for $p_{\alpha}$ resp.\ $p_{\alpha,A}$ and 
by $\|x+\ker p_{\alpha}\|_{\alpha}\coloneq p_{\alpha}(x)$, $x\in E$, resp.\ 
$\|x+\ker p_{\alpha}\|_{\alpha,A}\coloneq p_{\alpha,A}(x)$, $x\in F$, the norms on $E/\ker p_{\alpha}$ resp.\ $F/\ker p_{\alpha,A}$, which we extend to norms on the local Banach spaces with the same symbol. 
Further, we denote by $\kappa_{\alpha}^{F}\colon F_{A}\to F_{\alpha}$, $x\mapsto x+\ker p_{\alpha,A}$, 
the corresponding spectral map of $F_{A}$ for $\alpha\in\mathfrak{A}$.
A set of operators $(R_{\alpha}(t,A))_{\alpha\in\mathfrak{A}}$ is an 
\emph{asymptotic left resolvent} if for all $\alpha\in\mathfrak{A}$ there is $t_{\alpha}>0$ such that 
$R_{\alpha}(t,A)\in L(E,F_{\alpha})$ for all $t\geq t_{\alpha}$ and the continuous linear map 
$S_{\alpha}(t)\colon F_{A}\to F_{\alpha}$ given by 
\begin{equation}\label{eq:asymp_left_res_1}
S_{\alpha}(t)\coloneq R_{\alpha}(t,A)(t-A)-\kappa_{\alpha}^{F},\quad t\geq t_{\alpha},
\end{equation}
fulfils
\begin{align}\label{eq:asymp_left_res_2}
\forall\;n\in\N\;&\exists\;\beta\in\mathfrak{A},\,t_{\alpha,n}\geq t_{\alpha},\,C_{1},C_{2}>0\; 
\forall\;t\geq t_{\alpha,n},\,x\in F:\notag\\
&\|S_{\alpha}(t)x\|_{\alpha}\leq C_{1} p_{\beta,A}(x)
\quad\text{and}\quad 
\|S_{\alpha}^{\beta}(t)\|_{L(F_{\beta},F_{\alpha})}\leq C_{2}e^{-nt}
\end{align}
where $S_{\alpha}^{\beta}(t)\colon F_{\beta}\to F_{\alpha}$ is the continuous linear extension of the map 
$F/\ker p_{\beta,A}\to F_{\alpha}$, $x+\ker p_{\beta,A}\mapsto S_{\alpha}(t)x$.

\begin{thm}\label{thm:unique_prop_suff_0}
Let $E$ be an admissible sequentially complete $\C$-lcHs and $A\colon F\coloneq D(A)\subset E\to E$ a sequentially closed 
linear operator with admissible $F_{A}=(F,\tau_{A})$. 
The ACP \eqref{eq:ACP} has the uniqueness property (in the sense of hyperfunctions) if there is an asymptotic left 
resolvent $(R_{\alpha}(t,A))_{\alpha\in\mathfrak{A}}$ such that
\begin{align}\label{eq:asymp_left_res_3}
\forall\;\alpha\in\mathfrak{A}\;&\exists\;\gamma\in\mathfrak{A},\,k\in\N,\,C_{3},C_{4}>0\; 
\forall\;t\geq t_{\alpha},\,x\in E:\notag\\
&\|R_{\alpha}(t,A)x\|_{\alpha}\leq C_{3} p_{\gamma}(x)
\quad\text{and}\quad 
\|R_{\alpha}^{\gamma}(t,C)\|_{L(E_{\gamma},F_{\alpha})}\leq C_{4} e^{kt}
\end{align}
where $R_{\alpha}^{\gamma}(t,A)\colon E_{\gamma}\to F_{\alpha}$ is the continuous linear extension of the map
$E/\ker p_{\gamma}\to F_{\alpha}$, $x+\ker p_{\gamma}\mapsto R_{\alpha}(t,A)x$.
\end{thm}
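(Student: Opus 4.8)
The plan is to reduce everything to the characterisation in \prettyref{thm:unique_prop_equiv} and to verify its condition (c), which concerns only the behaviour of $h$ on the positive real half-line. This is exactly what an asymptotic left resolvent, being defined solely for real $t\geq t_{\alpha}$, is able to control; the genuinely complex-analytic step of passing from real-axis decay to membership in $\mathcal{LO}_{\{\infty\}}(F_{A})$ over the whole right half-plane is already carried out inside \prettyref{thm:unique_prop_equiv} via the Phragm\'en--Lindel\"of argument. So it suffices to take $h\in\mathcal{LO}_{[0,\infty]}(F_{A})$ with $(z-A)h\in\mathcal{LO}_{\{\infty\}}(E)$ and to show that $\{h(t)e^{Nt}\mid t\geq\varepsilon\}$ is weakly bounded in $F_{A}$ for every $N\in\N$ and every $\varepsilon>0$.

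First I would fix $\alpha\in\mathfrak{A}$, $N\in\N$ and $\varepsilon>0$, and evaluate the defining identity \eqref{eq:asymp_left_res_1} at a real point $t\geq t_{\alpha}$ on the vector $h(t)\in F$. Rearranging \eqref{eq:asymp_left_res_1} gives $\kappa_{\alpha}^{F}(h(t))=R_{\alpha}(t,A)(t-A)h(t)-S_{\alpha}(t)h(t)$, where $(t-A)h(t)=((z-A)h)(t)\in E$, so that $p_{\alpha,A}(h(t))=\|\kappa_{\alpha}^{F}(h(t))\|_{\alpha,A}$ is bounded by the sum of the two norms on the right. The aim is then to show that, after multiplication by $e^{Nt}$, both summands remain bounded as $t\to\infty$.

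For the first summand I would factor $R_{\alpha}(t,A)(t-A)h(t)=R_{\alpha}^{\gamma}(t,A)\,\kappa_{\gamma}^{E}((t-A)h(t))$ through the local Banach space $E_{\gamma}$ and use the operator-norm bound $\|R_{\alpha}^{\gamma}(t,A)\|_{L(E_{\gamma},F_{\alpha})}\leq C_{4}e^{kt}$ from \eqref{eq:asymp_left_res_3}. Since $(z-A)h\in\mathcal{LO}_{\{\infty\}}(E)$, the factor $p_{\gamma}((t-A)h(t))$ decays like $e^{(1/m-m)t}$ for every $m\in\N$, so choosing $m>k+N+1$ makes $C_{4}e^{kt}p_{\gamma}((t-A)h(t))e^{Nt}$ decay. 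For the second summand I would factor $S_{\alpha}(t)h(t)=S_{\alpha}^{\beta}(t)\,\kappa_{\beta}^{F}(h(t))$ through $F_{\beta}$ and apply the operator-norm bound $\|S_{\alpha}^{\beta}(t)\|_{L(F_{\beta},F_{\alpha})}\leq C_{2}e^{-nt}$ from \eqref{eq:asymp_left_res_2}, where the decisive point is that the natural number $n$ is at our disposal. Using $h\in\mathcal{LO}_{[0,\infty]}(F_{A})$ one has $p_{\beta,A}(h(t))\leq\|h\|_{1,\beta,[0,\infty]}e^{t}$, whence $C_{2}e^{-nt}p_{\beta,A}(h(t))e^{Nt}$ stays bounded once $n\geq N+1$. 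Combining the two estimates yields $\sup_{t\geq T_{0}}p_{\alpha,A}(h(t))e^{Nt}<\infty$ for a suitable threshold $T_{0}$; continuity of $t\mapsto p_{\alpha,A}(h(t))e^{Nt}$ on $[\varepsilon,T_{0}]$ then extends the bound to all of $[\varepsilon,\infty[$. As $\alpha,N,\varepsilon$ were arbitrary, $\{h(t)e^{Nt}\mid t\geq\varepsilon\}$ is strongly, hence weakly, bounded in $F_{A}$, so condition (c) of \prettyref{thm:unique_prop_equiv} holds and the uniqueness property follows.

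The main obstacle I anticipate is the bookkeeping of the three seminorm indices $\alpha,\beta,\gamma$ and the correct factorisation of $R_{\alpha}(t,A)$ and $S_{\alpha}(t)$ through the local Banach spaces $E_{\gamma}$, $F_{\beta}$, $F_{\alpha}$ via the spectral maps $\kappa_{\gamma}^{E}$ and $\kappa_{\beta}^{F}$, so that the operator-norm bounds in \eqref{eq:asymp_left_res_2} and \eqref{eq:asymp_left_res_3} genuinely apply to the vectors $(t-A)h(t)$ and $h(t)$; the first inequalities in \eqref{eq:asymp_left_res_2} and \eqref{eq:asymp_left_res_3} guarantee that these extensions are well defined and continuous. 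The one delicate point is matching the free parameters: $m$ must be taken large relative to the resolvent growth rate $k$ and the target $N$, while $n$ (and the accompanying $\beta$) must be taken large relative to the at-most-exponential growth of $h$ recorded by $\mathcal{LO}_{[0,\infty]}(F_{A})$. Everything else is a routine continuity argument on the compact interval $[\varepsilon,T_{0}]$.
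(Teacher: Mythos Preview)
Your proposal is correct and follows essentially the same route as the paper's proof: rearrange \eqref{eq:asymp_left_res_1} to write $\kappa_{\alpha}^{F}(h(t))$ as a difference, factor each summand through the local Banach spaces via $\kappa_{\gamma}^{E}$ and $\kappa_{\beta}^{F}$, apply the operator-norm bounds from \eqref{eq:asymp_left_res_2} and \eqref{eq:asymp_left_res_3}, choose the free parameters $m$ and $n$ large enough relative to $N$ and $k$, and finish with continuity on the compact interval $[\varepsilon,T_{0}]$ before invoking \prettyref{thm:unique_prop_equiv} (c). The only cosmetic difference is that the paper works directly with the seminorm indices $|v|_{k+m+1,\gamma,\{\infty\}}$ and $|h|_{m,(\beta,A),[0,\infty]}$ rather than your slightly coarser $e^{t}$ bound from $\|h\|_{1,(\beta,A),[0,\infty]}$, but either choice yields the same conclusion.
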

\begin{proof}
Let $h\in\mathcal{LO}_{[0,\infty]}(F_{A})$, $v\coloneq (z-A)h\in\mathcal{LO}_{\{\infty\}}(E)$, $\alpha\in\mathfrak{A}$ 
and $m\in\N$. 
Then there are $\gamma\in\mathfrak{A}$ and $k\in\N$, and for any $n\in\N$, $n>m$, there is $\beta\in\mathfrak{A}$ such that 
\begin{align*}
 \|\kappa_{\alpha}^{F}(h(t))\|_{\alpha,A}
&\underset{\mathclap{\eqref{eq:asymp_left_res_1}}}{\leq}\; \|R_{\alpha}(t,A)v(t)\|_{\alpha,A}
 + \|S_{\alpha}(t)h(t)\|_{\alpha,A}\\
&\leq \|R_{\alpha}^{\gamma}(t,A)\|_{L(E_{\gamma},F_{\alpha})}p_{\gamma}(v(t))
 +\|S_{\alpha}^{\beta}(t)\|_{L(F_{\beta},F_{\alpha})}p_{\beta,A}(h(t))\\
&\underset{\mathclap{\eqref{eq:asymp_left_res_2},\eqref{eq:asymp_left_res_3}}}{\leq} C_{4} e^{kt}p_{\gamma}(v(t))
 + C_{2}e^{-nt} p_{\beta,A}(h(t))
\end{align*}
for all $t\geq t_{\alpha,n}$. It follows that
\begin{align*}
 \|\kappa_{\alpha}^{F}(h(t))\|_{\alpha,A}e^{mt}
&\leq C_{4}p_{\gamma}(v(t)) e^{(k+m)t} + C_{2} p_{\beta,A}(h(t))e^{(m-n)t}\\
&\leq C_{4}p_{\gamma}(v(t))e^{-\frac{1}{k+m+1}t+(k+m+1)t}+ C_{2} p_{\beta,A}(h(t))e^{-\frac{1}{m}t}\\
&\leq C_{4}|v|_{k+m+1,\gamma,\{\infty\}}+C_{2}|h|_{m,(\beta,A),[0,\infty]}
\end{align*}
for all $t\geq t_{\alpha,n}$, which implies for $\varepsilon>0$
\begin{flalign*}
&\hspace{0.35cm} \sup_{t\geq \varepsilon} \|\kappa_{\alpha}^{F}(h(t))\|_{\alpha,A}e^{mt} \\
&\leq e^{mt_{\alpha,n}}\max_{\min(\varepsilon,t_{\alpha,n})\leq t\leq t_{\alpha,n}}
  \|\kappa_{\alpha}^{F}(h(t))\|_{\alpha,A}+ C_{4}|v|_{k+m+1,\gamma,\{\infty\}}+C_{2}|h|_{m,(\beta,A),[0,\infty]}
 <\infty
\end{flalign*}
where we use the continuity of $\|\cdot\|_{\alpha,A}\circ\kappa_{\alpha}^{F}\circ h$ as well. 
Thus $\{h(t)e^{mt}\;|\;t\geq\varepsilon\}$ is bounded in $F_{A}$ and we apply \prettyref{thm:unique_prop_equiv} (c).
\end{proof}

Langenbruch also formulated a sufficient criterion \cite[Theorem 7.3, p.\ 62]{langenbruch2011} for 
the uniqueness property by means of an asymptotic existence assumption for the dual operator, which we improve next.
 
\begin{thm}\label{thm:unique_prop_suff}
Let $E$ be an admissible sequentially complete $\C$-lcHs and $A\colon F\coloneq D(A)\subset E\to E$ a sequentially closed 
linear operator with admissible $F_{A}=(F,\tau_{A})$. 
Then the ACP \eqref{eq:ACP} has the uniqueness property (in the sense of hyperfunctions) if for any $y\in F_{A}'$ and 
any $n\in\N$ there are $k\in\N$, $\alpha\in\mathfrak{A}$, $C_{1}>0$ and $t_{y,n}> 0$ such that for 
any $t\geq t_{y,n}$ there are $\widetilde{y}_{y,n}(t)\in E'$, $s_{y,n}(t)\in F_{A}'$, such that for all $t\geq t_{y,n}$ and 
$x\in E$, $z\in F$ it holds that
\begin{align*}
(t-A^{\operatorname{t}})\widetilde{y}_{y,n}(t)&=y+s_{y,n}(t),\quad 
|\langle \widetilde{y}_{y,n}(t), x\rangle| \leq C_{1}p_{\alpha}(x)e^{kt},\\
&|\langle s_{y,n}(t),z\rangle|\leq C_{1}(p_{\alpha}(z)+p_{\alpha}(Az))e^{-nt}.
\end{align*}
\end{thm}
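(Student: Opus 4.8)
The plan is to verify condition (c) of \prettyref{thm:unique_prop_equiv}, running the same scheme as the proof of \prettyref{thm:unique_prop_suff_0} but now in the weak (dual) formulation. So I would fix $h\in\mathcal{LO}_{[0,\infty]}(F_{A})$ with $v\coloneq (z-A)h\in\mathcal{LO}_{\{\infty\}}(E)$, a functional $y\in F_{A}'$ and an exponent $m\in\N$, and aim to show $\sup_{t\geq\varepsilon}|\langle y,h(t)\rangle|e^{mt}<\infty$ for some $\varepsilon>0$. This is exactly the weak boundedness of $\{h(t)e^{mt}\;|\;t\geq\varepsilon\}$ demanded in (c), once we note that weak boundedness means precisely boundedness of $t\mapsto\langle y,h(t)\rangle e^{mt}$ for each $y\in F_{A}'$.

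The first step is the dual algebraic identity. Applying the hypothesis to $y$ and to $n\coloneq m+1$ produces $k$, $\alpha$, $C_{1}$, $t_{y,n}$ and, for each $t\geq t_{y,n}$, functionals $\widetilde{y}_{y,n}(t)\in E'$ and $s_{y,n}(t)\in F_{A}'$ with $(t-A^{\operatorname{t}})\widetilde{y}_{y,n}(t)=y+s_{y,n}(t)$. Reading this equation inside $F_{A}'$ and pairing it with $h(t)\in F=D(A)$, the defining relation $\langle A^{\operatorname{t}}\widetilde{y}_{y,n}(t),h(t)\rangle=\langle\widetilde{y}_{y,n}(t),Ah(t)\rangle$ of the dual operator makes the terms collapse to
\[
\langle y,h(t)\rangle=\langle\widetilde{y}_{y,n}(t),(t-A)h(t)\rangle-\langle s_{y,n}(t),h(t)\rangle=\langle\widetilde{y}_{y,n}(t),v(t)\rangle-\langle s_{y,n}(t),h(t)\rangle.
\]
This is the precise analogue of the splitting $\kappa_{\alpha}^{F}(h(t))=R_{\alpha}(t,A)v(t)-S_{\alpha}(t)h(t)$ used in \prettyref{thm:unique_prop_suff_0}, with $\widetilde{y}_{y,n}(t)$ playing the role of the resolvent and $s_{y,n}(t)$ the role of the defect $S_{\alpha}(t)$.

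The second step is the exponential bookkeeping, which is identical to \prettyref{thm:unique_prop_suff_0}. The growth bound on $\widetilde{y}_{y,n}(t)$ gives $|\langle\widetilde{y}_{y,n}(t),v(t)\rangle|\leq C_{1}p_{\alpha}(v(t))e^{kt}$, while the decay bound on $s_{y,n}(t)$ gives $|\langle s_{y,n}(t),h(t)\rangle|\leq C_{1}p_{\alpha,A}(h(t))e^{-nt}$, using $p_{\alpha}(\cdot)+p_{\alpha}(A\cdot)=p_{\alpha,A}$. Multiplying by $e^{mt}$ and inserting the same trivial majorisations $e^{(k+m)t}\leq e^{-\frac{1}{k+m+1}t+(k+m+1)t}$ and $e^{(m-n)t}\leq e^{-\frac{1}{m}t}$ (the latter valid since $n=m+1\geq m+\tfrac{1}{m}$), the real-axis evaluation of the defining seminorms of $\mathcal{LO}_{\{\infty\}}(E)$ and $\mathcal{LO}_{[0,\infty]}(F_{A})$ yields
\[
|\langle y,h(t)\rangle|e^{mt}\leq C_{1}|v|_{k+m+1,\alpha,\{\infty\}}+C_{1}|h|_{m,(\alpha,A),[0,\infty]}
\]
for all sufficiently large $t$. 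Taking the supremum over $t\geq\varepsilon$ and absorbing the remaining compact range of $t$ via the continuity of $t\mapsto\langle y,h(t)\rangle$ gives the required finiteness; since $y\in F_{A}'$ was arbitrary, (c) of \prettyref{thm:unique_prop_equiv} applies and the uniqueness property follows.

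I expect the only genuinely delicate point to be the first step: one must be sure the dual identity may legitimately be paired with $h(t)$, i.e.\ that the transpose relation $\langle A^{\operatorname{t}}\widetilde{y}_{y,n}(t),h(t)\rangle=\langle\widetilde{y}_{y,n}(t),Ah(t)\rangle$ is available. This rests on $h(t)\in D(A)$ (guaranteed by $h\in\mathcal{LO}_{[0,\infty]}(F_{A})$) and on interpreting $(t-A^{\operatorname{t}})\widetilde{y}_{y,n}(t)=y+s_{y,n}(t)$ inside $F_{A}'$ through the canonical inclusion $E'|_{F}\subset F_{A}'$, which holds because $\tau_{A}$ is finer than the subspace topology induced by $E$. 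Once this handling of domains and dual pairings is in place, everything else is the exponential estimate already performed for \prettyref{thm:unique_prop_suff_0}, now evaluated against arbitrary $y\in F_{A}'$ so that it delivers weak rather than strong boundedness.
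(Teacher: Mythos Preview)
Your proposal is correct and follows essentially the same approach as the paper's proof: the dual splitting $\langle y,h(t)\rangle=\langle\widetilde{y}_{y,n}(t),v(t)\rangle-\langle s_{y,n}(t),h(t)\rangle$, the same exponential bookkeeping (the paper just says ``choose $n>m$'' where you take $n=m+1$), and the same appeal to \prettyref{thm:unique_prop_equiv}(c). Your discussion of why the transpose pairing is legitimate is even a bit more careful than the paper, which simply writes the identity without comment.
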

\begin{proof}
Let $h\in\mathcal{LO}_{[0,\infty]}(F_{A})$ and $v\coloneq (z-A)h\in\mathcal{LO}_{\{\infty\}}(E)$. 
Due to our assumption we have for any $y\in F_{A}'$ and $n\in\N$ 
\[
 \langle y,h(t)\rangle 
=\langle (t-A^{\operatorname{t}})\widetilde{y}_{y,n}(t), h(t)\rangle - \langle s_{y,n}(t),h(t)\rangle 
=\langle \widetilde{y}_{y,n}(t), v(t)\rangle - \langle s_{y,n}(t),h(t)\rangle 
\]
for $t\geq t_{y,n}$, implying
\begin{align*}
 |\langle y,h(t)\rangle|
&\leq C_{1}p_{\alpha}(v(t))e^{kt}+C_{1}(p_{\alpha}(h(t))+p_{\alpha}(Ah(t)))e^{-nt}.
\end{align*}
Let $m\in\N$ and choose $n\in\N$ with $n>m$. Then we get
\begin{align*}
 |\langle y,h(t)\rangle|e^{mt}
&\leq C_{1}p_{\alpha}(v(t))e^{(k+m)t}+C_{1}p_{\alpha,A}(h(t))e^{(m-n)t}\\
&\leq  C_{1}p_{\alpha}(v(t))e^{-\frac{1}{k+m+1}t+(k+m+1)t}+C_{1}p_{\alpha,A}(h(t))e^{-\frac{1}{m}t}\\
&\leq C_{1}|v|_{k+m+1,\alpha,\{\infty\}}+C_{1}|h|_{m,(\alpha,A),[0,\infty]},
\end{align*}
for $t\geq t_{y,n}$, which yields for $\varepsilon>0$ 
\begin{flalign*}
&\hspace{0.35cm} \sup_{t\geq \varepsilon}|\langle y,h(t)\rangle|e^{mt} \\
&\leq e^{mt_{y,n}}\max_{\min(\varepsilon,t_{y,n})\leq t\leq t_{y,n}}|\langle y,h(t)\rangle|
 + C_{1}|v|_{k+m+1,\alpha,\{\infty\}}+C_{1}|h|_{m,(\alpha,A),[0,\infty]}<\infty
\end{flalign*}
where we use the continuity of $y\circ h$ as well. Therefore $\{h(t)e^{mt}\;|\;t\geq\varepsilon\}$ 
is weakly bounded in $F_{A}$ for any $m\in\N$ and we apply \prettyref{thm:unique_prop_equiv} (c).
\end{proof}

As an application of \prettyref{thm:unique_prop_suff} we consider the uniqueness of the ACP in the setting  
where $E\coloneq F\coloneq s(\N)_{b}'$ with the nuclear Fr\'echet space
\[
s(\N)\coloneq\{x\in\C^{\N}\;|\;\forall\;p\in\N:\;|x|_{p}^{s(\N)}\coloneq\sup_{i\in\N}|x_{i}|i^{p}<\infty\}
\]
of rapidly decreasing sequences and $A\colon F\to E$ is a continuous linear operator. 
Since $s(\N)$ is reflexive, we have $(s(\N)_{b}')_{b}'=s(\N)$ and $A^{\operatorname{t}}\in L(s(\N))$ 
for the dual map by \cite[Proposition 23.30 (b), p.\ 274]{meisevogt1997}. 
Due to \cite[Exercises 4, p.\ 377]{meisevogt1997} the map $A^{\operatorname{t}}$ is given by an infinite matrix 
$\mathbf{A}^{\operatorname{t}}=(a_{ij})_{i,j\in\N}\in\C^{\N\times\N}$ such that 
\[
\forall\;\sigma\in\R\;\exists\;s\in\R,\,C>0\;\forall\;i,j\in\N:\;|a_{ij}|\leq Cj^{s}i^{-\sigma}
\]
because $s(\N)$ coincides with the power series space $\Lambda_{\infty}((\ln(j))_{j\in\N})$. 
We also consider the ACP in the classical sense in our next theorem, i.e.\ the problem 
\begin{equation}\label{eq:ACP_classical}
x'(t)=Ax(t),\quad t>0, \quad x(0)=x_{0}\in s(\N)',
\end{equation}
where $x\in\mathcal{C}^{1}([0,\infty[,s(\N)_{b}')$.

\begin{thm}\label{thm:unique_exa}
Let $A\in L(s(\N)_{b}')$ and $\mathbf{A}^{\operatorname{t}}$ the infinite matrix that 
represents $A^{\operatorname{t}}\in L(s(\N))$. 
Let $(\mathbf{A}^{\operatorname{t}})^{l}=(a_{ij}^{(l)})_{i,j\in\N}\in\C^{\N\times\N}$ for all $l\in\N$. 
Consider the following statements:
\begin{enumerate}
\item[(a)] $\forall\;p\in\N\;\exists\;q\in\N,\,C>0\;\forall\;l,i,j\in\N:\;|a_{ij}^{(l)}|\leq C^{l}j^{q}i^{-p}$
\item[(b)] The ACP \eqref{eq:ACP} has the uniqueness property (in the sense of hyperfunctions).
\item[(c)] The ACP \eqref{eq:ACP_classical} has the uniqueness property (in the classical sense).
\end{enumerate}
We have the chain of implications (a) $\Rightarrow$ (b) $\Rightarrow$ (c).
\end{thm}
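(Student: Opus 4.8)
The plan is to obtain (a) $\Rightarrow$ (b) by verifying the dual solvability hypothesis of \prettyref{thm:unique_prop_suff}, and (b) $\Rightarrow$ (c) by embedding classical $\mathcal{C}^1$-solutions into the space of hyperfunctions. First I would record the structural facts of this example. Since $A\in L(s(\N)_b')$ is continuous with $F=D(A)=E=s(\N)_b'$, we have $F_A=E$ as locally convex spaces, and both are strictly admissible by \prettyref{thm:examples_strictly_admiss}~(b), because $s(\N)=\Lambda_\infty((\ln(j))_{j\in\N})$ is a power series space of infinite type and hence satisfies $(DN)$; moreover $E$ is complete. By reflexivity of $s(\N)$ I identify $E'=F_A'=s(\N)$ with the bilinear pairing $\langle x,y\rangle=\sum_i x_iy_i$, and a fundamental system of seminorms on $E=s(\N)_b'$ is given by $q_w(x)\coloneq\sum_i w_i|x_i|$, where $w=(w_i)$ runs through the nonnegative rapidly decreasing sequences (each $q_w$ being the supporting functional of the bounded set $\{y\in s(\N)\mid |y_i|\le w_i\}$). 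For any such $w$ and any $v\in s(\N)$ one has the elementary bound $|\langle v,x\rangle|\le(\sup_i|v_i|/w_i)\,q_w(x)$, which reduces both inequalities of \prettyref{thm:unique_prop_suff} to weighted sup-estimates on coefficients.

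For (a) $\Rightarrow$ (b) I would, given $y\in F_A'=s(\N)$ and $n\in\N$, use a truncated Neumann series of the dual resolvent. Setting $N\coloneq N(t)\coloneq\lfloor t\rfloor$ I define
\[
\widetilde{y}_{y,n}(t)\coloneq\sum_{l=0}^{N-1}t^{-(l+1)}(A^{\operatorname{t}})^{l}y,\qquad
s_{y,n}(t)\coloneq-t^{-N}(A^{\operatorname{t}})^{N}y,
\]
both elements of $s(\N)$, and a telescoping computation gives the identity $(t-A^{\operatorname{t}})\widetilde{y}_{y,n}(t)=y+s_{y,n}(t)$ required by the theorem. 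Writing $b_i^{(l)}\coloneq|((A^{\operatorname{t}})^{l}y)_i|=|\sum_j a_{ij}^{(l)}y_j|$, assumption (a) furnishes for every $p\in\N$ indices $q=q_p$ and a constant $C=C_p$ with $b_i^{(l)}\le C_p^{\,l}\,i^{-p}M_{q_p}(y)$ for all $l,i$, where $M_q(y)\coloneq\sum_j j^{q}|y_j|<\infty$ because $y\in s(\N)$.

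The weight is produced from the partial sums and the remainder simultaneously. Fixing $k\coloneq 1$ and $t_{y,n}\coloneq 1$, and abbreviating $P_i(t)\coloneq\sum_{l=0}^{N-1}t^{-(l+1)}b_i^{(l)}$ and $R_i(t)\coloneq t^{-N}b_i^{(N)}$, I set
\[
w_i\coloneq\sup_{t\ge t_{y,n}}\bigl(P_i(t)e^{-kt}+R_i(t)e^{nt}\bigr).
\]
By construction $w_i\ge P_i(t)e^{-kt}$ and $w_i\ge R_i(t)e^{nt}$ for all $t\ge t_{y,n}$, so $\sup_i|\widetilde{y}_{y,n}(t)_i|/w_i\le e^{kt}$ and $\sup_i|s_{y,n}(t)_i|/w_i\le e^{-nt}$; the elementary bound above then yields $|\langle\widetilde{y}_{y,n}(t),x\rangle|\le q_w(x)e^{kt}$ and $|\langle s_{y,n}(t),z\rangle|\le q_w(z)e^{-nt}\le(q_w(z)+q_w(Az))e^{-nt}$, which are precisely the two estimates of \prettyref{thm:unique_prop_suff} once $q_w$ is dominated by a seminorm $p_\alpha$ of the fixed directed system. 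The hard part is to show that $w$ is rapidly decreasing. Using $b_i^{(l)}\le C_p^{\,l}i^{-p}M_{q_p}(y)$ and $N=\lfloor t\rfloor\le t$ one bounds the remainder part by $i^{-p}M_{q_p}(y)\sup_{t}(C_p/t)^{\lfloor t\rfloor}e^{nt}$ and the partial-sum part by $i^{-p}M_{q_p}(y)\sup_{t}e^{-kt}(1+(C_p/t)^{\lfloor t\rfloor})$; since $\lfloor t\rfloor\ln(C_p/t)+nt\to-\infty$ as $t\to\infty$, both suprema are finite, whence $w_i\le K_p\,i^{-p}$ for every $p\in\N$. This is exactly where the choice $N(t)=\lfloor t\rfloor$ is essential (it makes $t^{-N}$ beat both $C_p^{\,N}$ and the target factor $e^{nt}$ for \emph{every} $p$ at once) and where the full strength of (a)---one geometric constant $C_p^{\,l}$ valid for all powers $l$---is used. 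Thus $q_w$ is a continuous seminorm on $E$, and \prettyref{thm:unique_prop_suff} gives (b).

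For (b) $\Rightarrow$ (c) I would embed classical solutions. Let $x\in\mathcal{C}^1([0,\infty[,s(\N)_b')$ solve \eqref{eq:ACP_classical} with $x_0=0$, and let $\widetilde{x}$ denote its extension by $0$ to $\R$, which is continuous since $x(0)=0$. Then $\widetilde{x}$ defines a hyperfunction $[\widetilde{x}]\in\mathcal{B}([0,\infty[,F_A)$, and the hyperfunction derivative obeys the jump relation $\tfrac{d}{dt}[\widetilde{x}]=[\widetilde{x'}]+x_0\otimes\delta_0$. As $x'=Ax$ on $[0,\infty[$ and $A$ is continuous, $[\widetilde{x'}]=A[\widetilde{x}]$, so $\tfrac{d}{dt}[\widetilde{x}]-A[\widetilde{x}]=x_0\otimes\delta_0=0$; that is, $[\widetilde{x}]$ is a hyperfunction solution of \eqref{eq:ACP_hyp} with $x_0=0$. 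By (b) we obtain $[\widetilde{x}]=0$, and since the embedding of continuous functions into $\mathcal{B}([0,\infty[,F_A)$ is injective, $\widetilde{x}=0$ and hence $x=0$, proving (c). The only delicate points here are the jump formula for $\tfrac{d}{dt}[\widetilde{x}]$ and the injectivity of the embedding, both of which are properties of the function-to-hyperfunction map underlying \cite{kruse2021_1}.
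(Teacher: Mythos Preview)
Your proof is correct, and for (a)$\Rightarrow$(b) it follows a genuinely different route from the paper. Both arguments verify the dual hypothesis of \prettyref{thm:unique_prop_suff}, but the paper sums the \emph{full} Neumann series $Y(t)=\sum_{l\ge 0}t^{-(l+1)}(A^{\operatorname{t}})^{l}y$ and uses the exact identity $(t-A^{\operatorname{t}})Y(t)=y$, so that the error $s_{y,n}$ is identically zero; this forces the series to converge in $s(\N)$, i.e.\ $t>C$ in every seminorm simultaneously, and the proof treats $C$ as a single threshold. Your truncation at $N(t)=\lfloor t\rfloor$ sidesteps the convergence question entirely: the remainder $t^{-N}(A^{\operatorname{t}})^{N}y$ is controlled via $(C_{p}/t)^{\lfloor t\rfloor}e^{nt}\to 0$ for \emph{every} $p$ and $n$, irrespective of how $C_{p}$ grows with $p$, so the argument works under condition (a) exactly as written with $C$ depending on $p$. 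The price is a nonzero $s_{y,n}$ and the construction of a single weight $w$ valid for all $t\ge 1$; that is precisely what the growing cutoff $N(t)=\lfloor t\rfloor$ buys. Two cosmetic points: the weight $w_{i}$ may vanish when $((A^{\operatorname{t}})^{l}y)_{i}=0$ for all $l$, but replacing $w$ by $w+(\,i^{-i}\,)_{i}$ fixes this without affecting rapid decrease; and your intermediate bound $P_{i}(t)\le i^{-p}M_{q_{p}}(y)\bigl(1+(C_{p}/t)^{\lfloor t\rfloor}\bigr)$ is justified by $\sum_{l=0}^{N-1}r^{l}\le N\max(1,r^{N-1})\le N(1+r^{N})$ together with $t^{-1}\lfloor t\rfloor\le 1$. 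Your argument for (b)$\Rightarrow$(c) is essentially the paper's, only spelled out more explicitly (jump formula and injectivity of the embedding).
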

\begin{proof}
(a) $\Rightarrow$ (b): We will use \prettyref{thm:unique_prop_suff}. The complete space $E\coloneq F\coloneq s(\N)_{b}'$ is 
admissible by \prettyref{thm:examples_strictly_admiss} (c) and \cite[Corollary 4.8, p.\ 1116]{D/L}. 
Since $A\in L(s(\N)_{b}')$ and $s(\N)$ is reflexive, we have $(s(\N)_{b}')_{A}=s(\N)_{b}'$ 
and $s(\N)=(s(\N)_{b}')_{b}'$. Let $y=(y_{j})_{j\in\N}\in (s(\N)_{b}')_{b}'=s(\N)$ and set 
\[
  Y(t)^{(m)}
\coloneq \sum_{l=0}^{m}(\mathbf{A}^{\operatorname{t}})^{l}yt^{-l-1}
 =yt^{-1}+\sum_{l=1}^{m}\bigl(\sum_{j=1}^{\infty}a_{ij}^{(l)}y_{j}\bigr)_{i\in\N}t^{-l-1}\in s(\N)
\]
for $m\in\N_{0}$ and $t>0$. We claim that $(Y(t)^{(m)})_{m\in\N_{0}}$ converges in $s(\N)$ if $t$ is big enough. 
We note that for any $m,n\in\N$, $m\geq n$, and $i,k\in\N$ it holds
\begin{align}\label{eq:unique_exa_estim}
  |\sum_{l=n}^{m}\sum_{j=1}^{\infty}a_{ij}^{(l)}y_{j}t^{-l-1}|i^{p}
&\leq \sum_{l=n}^{m}\sum_{j=1}^{\infty}|a_{ij}^{(l)}||y_{j}|t^{-l-1}i^{p}
 \leq \sum_{l=n}^{m}\sum_{j=1}^{\infty}C^{l}j^{q}i^{-p}|y_{j}|t^{-l-1}i^{p}\notag\\
&=\sum_{j=1}^{\infty}j^{q}|y_{j}|\sum_{l=n}^{m}C^{l}t^{-l-1}
 \leq\sum_{j=1}^{\infty}j^{-2}\sup_{r\in\N}|y_{r}|r^{q+2}
  t^{-1}\sum_{l=n}^{m}\Bigl(\frac{C}{t}\Bigr)^{l}\notag\\
&=\frac{\pi^{2}}{6}\|y\|_{q+2}^{s(\N)}t^{-1}\sum_{l=n}^{m}\Bigl(\frac{C}{t}\Bigr)^{l},
\end{align}
which implies that $(Y(t)^{(m)})_{m\in\N_{0}}$ is a Cauchy sequence in $s(\N)$ if $t>C$. Hence the limit 
\[
Y(t)\coloneq\lim_{m\to\infty}Y(t)^{(m)}=\sum_{l=0}^{\infty}(\mathbf{A}^{\operatorname{t}})^{l}yt^{-l-1}
     =\sum_{l=0}^{\infty}\bigl(\sum_{j=1}^{\infty}a_{ij}^{(l)}y_{j}\bigr)_{i\in\N}t^{-l-1}
\]
exists in the complete space $s(\N)$ if $t>C$. Furthermore, we have 
\begin{align*}
 (t-A^{\operatorname{t}})Y(t)^{(m)}
&=tY(t)^{(m)}-A^{\operatorname{t}}Y(t)^{(m)}
 =\sum_{l=0}^{m}(\mathbf{A}^{\operatorname{t}})^{l}yt^{-l}
  -\sum_{l=0}^{m}(\mathbf{A}^{\operatorname{t}})^{l+1}yt^{-l-1}\\
&=y-(\mathbf{A}^{\operatorname{t}})^{m+1}yt^{-m-1}
\end{align*}
as well as 
\begin{align*}
 |((\mathbf{A}^{\operatorname{t}})^{m+1}y)_{i}|t^{-m-1}i^{p}
&\leq\sum_{j=1}^{\infty}|a_{ij}^{(m+1)}||y_{j}|t^{-m-1}i^{p}
 \leq \sum_{j=1}^{\infty}C^{m+1}j^{q}i^{-p}|y_{j}|t^{-m-1}i^{p}\\
&\leq \frac{\pi^{2}}{6}|y|_{q+2}^{s(\N)}\Bigl(\frac{C}{t}\Bigr)^{m+1}
\end{align*}
for all $m\in\N_{0}$ and $t>0$, yielding that  
\[
 (t-A^{\operatorname{t}})Y(t)
=\lim_{m\to\infty}(t-A^{\operatorname{t}})Y(t)^{(m)}
=y-\lim_{m\to\infty}(\mathbf{A}^{\operatorname{t}})^{m+1}yt^{-m-1}
=y
\]
in $s(\N)$ if $t>C$. The topology of $s(\N)_{b}'$ is induced by the seminorms
\[
p_{B}(x)\coloneq\sup_{w\in B}|x(w)|,\quad x\in s(\N)',
\]
for bounded sets $B\subset s(\N)$. We remark that 
\begin{align*}
  |Y(t)|_{p}^{s(\N)}
&=\sup_{i\in\N}|Y(t)_{i}|i^{p}
 \underset{\eqref{eq:unique_exa_estim}}{\leq}
\sup_{i\in\N}t^{-1}|y_{i}|i^{p}+\frac{\pi^{2}}{6}
   |y|_{q+2}^{s(\N)}t^{-1}\sum_{l=1}^{\infty}\Bigl(\frac{C}{t}\Bigr)^{l}\\
&= t^{-1}|y|_{p}^{s(\N)}+\frac{\pi^{2}}{6}|y|_{q+2}^{s(\N)}\frac{\frac{C}{t}}{t(1-\frac{C}{t})}
\leq  \frac{1}{2C}|y|_{p}^{s(\N)}+\frac{\pi^{2}}{12C}|y|_{q+2}^{s(\N)}
=: K_{p}
\end{align*}
if $t>2C$. Thus $Y(t)\in\{w\in s(\N)\;|\;\forall\;p\in\N:\;|w|_{p}^{s(\N)}\leq K_{p} \}=:B_{0}$ if $t>2C$, 
and $B_{0}$ is a bounded set in $s(\N)$. 
So for $x\in s(\N)'$ we have 
\[
 |\langle Y(t), x\rangle|
=|x(Y(t))|
\leq \sup_{w\in B_{0}}|x(w)|=p_{B_{0}}(x)
\]
if $t>2C$. Hence we may apply \prettyref{thm:unique_prop_suff} with $t_{y,n}\coloneq 2C$,  
$\widetilde{y}_{y,n}\coloneq Y$, $s_{y,n}\coloneq 0$ for $n\in\N$ as well as $k\coloneq C_{1}\coloneq 1$ and $\alpha\coloneq B_{0}$.
%\begin{align*}
%  s(\N)'
%&=\{x\in \C^{\N}\;|\;\forall\;b\in s(\N):\;\|x\|_{b}:=\sum_{i=1}^{\infty}|x_{i}b_{i}|<%\infty\}\\
%&=\{x\in \C^{\N}\;|\;\exists\;p\in\N:\;\|x\|_{p}^{\ast}:=\sum_{i=1}^{\infty}|x_{i}|i^{-%p}<\infty\}
%\end{align*}
%and $(\|\cdot\|_{b})_{b\in s(\N)}$ is a fundamental system of seminorms of $s(\N)_{b}'$ 
%by \cite[Proposition 27.13, p.\ 332-333]{meisevogt1997}. 
%So for $x\in s(\N)'$ there is $p\in\N$ such that $\|x\|_{p}^{\ast}<\infty$ and 
%\begin{align*}
% |\langle Y(t), x\rangle|
%&=|x(Y(t))|
% =|\sum_{i=1}^{\infty}x_{i}Y(t)_{i}|
% \leq\sum_{n=1}^{\infty}|x_{i}||Y(t)_{i}|\\
%&\underset{\eqref{eq:unique_exa_estim}}{\leq}\sum_{i=1}^{\infty}|x_{i}|\Bigl(t^{-1}|%y_{i}|+i^{-p}\frac{\pi^{2}}{6}
%   \|y\|_{q+2}^{s(\N)}t^{-1}\sum_{l=1}^{\infty}\Bigl(\frac{C}{t}\Bigr)^{l}\Bigr)\\
%&\leq t^{-1}\sum_{i=1}^{\infty}|x_{i}y_{i}|
% +\|x\|_{p}^{\ast}\frac{\pi^{2}}{6}\|y\|_{q+2}^{s(\N)}\frac{\frac{C}{t}}{t(1-\frac{C}{t})}\\
%&\leq  \frac{1}{2C}\|x\|_{y}+\frac{\pi^{2}}{12C}\|y\|_{q+2}^{s(\N)}\|x\|_{p}^{\ast} 
%\end{align*}
%if $t>2C$.

(b) $\Rightarrow$ (c): Let $x\in\mathcal{C}^{1}([0,\infty[,s(\N)_{b}')$ 
be a solution of the ACP \eqref{eq:ACP_classical} for $x_{0}\coloneq 0$. 
Then $x$ defines a hyperfunction $[u]$ in $\mathcal{B}([0,\infty[,s(\N)_{b}')$ 
(for instance by \cite[Theorem 6.9, p.\ 1125]{D/L} as in \cite[Theorem 1.3.10, p.\ 25]{Kan} and 
\cite[Theorem 1.3.13 b), p.\ 31]{Kan}) which solves \eqref{eq:ACP_hyp} for $x_{0}=0$. Thus $[u]=0$  
by the uniqueness property in the sense of hyperfunctions, implying $x=0$ on $[0,\infty[$.
\end{proof}

\section{Solvability of the ACP}
\label{sect:solvability}

Let us turn to the question of existence of a solution of the ACP \eqref{eq:ACP}. 
Following \cite[p.\ 64]{langenbruch2011}, this boils down to 
solving the equation $(\lambda-A)S(\lambda)=x_{0}$ only approximately near the half-circle 
$S_{\infty}\coloneq\{\infty e^{i\varphi}\;|\;|\varphi|<\tfrac{\pi}{2}\}$ at $\infty$, 
and the approximate solution is needed only in the local Banach spaces of $F_{A}=(F,\tau_{A})$.
The precise characterisation of existence of a solution given in \prettyref{thm:solution_ACP} below uses 
the Laplace transform of $E$-valued Laplace hyperfunctions from \cite{domanskilangenbruch2010}. 
We recall what is needed.
Let
\[
H\coloneq\lim\limits_{\substack{\longrightarrow\\K\in \N}}\bigl(\lim\limits_{\substack{\longleftarrow\\k\in \N}}\,H_{K,k}\bigr)
\]
be the inductive limit of the projective limit $\lim\limits_{\substack{\longleftarrow\\k\in \N}}\,H_{K,k}$ where
\[
H_{K,k}\coloneq\{f\in\mathcal{O}(\Omega_{K})\;|\;\|f\|_{K,k}^{H}\coloneq\sup_{z\in\Omega_{K}}|f(z)|e^{k\re(z)}<\infty\}
\]
and
\[
\Omega_{K}\coloneq\Bigl\{z\in\C\;|\;|\im(z)|<\frac{\re(z)}{K}+\frac{1}{K^{2}}\Bigr\}.
\]

\begin{center}
\begin{minipage}{\linewidth}
\centering
\begin{tikzpicture}
\def\mypath{
(6,2.11) -- (-0.33,0) -- (6,-2.11)}
\fill[fill=black!10,draw=black,dashed,thick] \mypath;
\node[anchor=north east] (A) at (5.5,1) {$\Omega_{K}$};
\draw(-0.33cm,1pt)--(-0.33cm,-1pt)node[anchor=north east] {$-\tfrac{1}{K}$};
\draw(1.1cm,0.44cm)--(0.9cm,0.44cm)node[anchor=south east] {$\tfrac{1}{K^{2}}$};
\draw(1.1cm,-0.44cm)--(0.9cm,-0.44cm)node[anchor=north east] {$-\tfrac{1}{K^{2}}$};
\draw[->] (-1,0) -- (6,0) node[right] {$\re(z)$} coordinate (x axis);
\draw[->] (1,-2.11) -- (1,2.11) node[above] {$\im(z)$} coordinate (y axis);
\end{tikzpicture}
\end{minipage}
\captionsetup{type=figure}
\caption{$\Omega_{K}$ for $K\in\N$}
\end{center}

By \cite[Definition 2.3, p.\ 133]{domanskilangenbruch2010} an $E$-valued \emph{Laplace hyperfunction} 
(in the sense of Doma{\'n}ski and Langenbruch) is a continuous linear operator $T\colon H\to E$ for complete $E$. 
Its Laplace transform $\mathscr{L}(T)$ is not a single holomorphic function but a compatible family 
of holomorphic functions, a so-called spectral-valued holomorphic function, 
for whose definition we need to direct the index set $\mathfrak{A}$ of the seminorms of $E$ first.

Let $E$ be a complete $\C$-lcHs with a directed system of seminorms $(p_{\alpha})_{\alpha\in\mathfrak{A}}$, i.e.\ 
for $\alpha,\beta\in\mathfrak{A}$ there are $\gamma\in\mathfrak{A}$ and $C_{1}>0$ such that 
$\max(p_{\alpha},p_{\beta})\leq C_{1}p_{\gamma}$. We write $\alpha\leq\beta$ for $\alpha,\beta\in\mathfrak{A}$ 
if there is $C_{2}>0$ such $p_{\alpha}\leq C_{2}p_{\beta}$. Then $\leq$ is a preorder on $\mathfrak{A}$ 
and $(\mathfrak{A},\leq)$ a directed set due to the system of seminorms being directed. 
Furthermore, for $\alpha,\beta\in\mathfrak{A}$ with $\alpha\leq\beta$ 
we denote by $\kappa^{\beta}_{\alpha}\colon E_{\beta}\to E_{\alpha}$ the linking maps of the local Banach spaces,
which are the continuous linear extensions of the maps $E/\ker p_{\alpha} \to E/\ker p_{\beta}$, 
$x+\ker p_{\alpha}\mapsto x + \ker p_{\beta}$, and by 
$\kappa_{\alpha}^{E}\colon E\to E_{\alpha}$, $x\mapsto x+\ker p_{\alpha}$, the spectral maps. 
With these definitions $E$ becomes a projective limit of its local Banach spaces $E_{\alpha}$, i.e.\ 
$E=\lim\limits_{\longleftarrow}{}_{\alpha\in\mathfrak{A}}E_{\alpha}$ (see \cite[p.\ 151--152]{Kaballo}).  

Let $\mathcal{E}\coloneq (E_{\alpha})_{\alpha\in\mathfrak{A}}$, and $\mathcal{G}\coloneq (G_{\alpha})_{\alpha\in\mathfrak{A}}$ 
be a directed family of non-empty domains in $\C$, i.e.\ 
they are open and connected sets and $G_{\beta}\subset G_{\alpha}$ for $\alpha\leq\beta$ 
(see \cite[p.\ 131]{domanskilangenbruch2010}).
By \cite[Definition 2.1, p.\ 132]{domanskilangenbruch2010} a family 
$\mathcal{S}\coloneq (S_{\alpha})_{\alpha\in\mathfrak{A}}$ is called a \emph{spectral-valued} 
(or $\mathcal{E}$\emph{-valued}) \emph{holomorphic function} (denoted by $\mathcal{S}\colon\mathcal{G}\to\mathcal{E}$) 
if 
\begin{enumerate}
\item[(i)] $S_{\alpha}\colon G_{\alpha}\to E_{\alpha}$ is holomorphic for all $\alpha\in\mathfrak{A}$, and
\item[(ii)] (compatibility) 
$\forall\;\alpha,\beta\in\mathfrak{A},\,\alpha\leq\beta:\;
\kappa_{\alpha}^{\beta}\circ S_{\beta}={S_{\alpha}}_{\mid G_{\beta}}$.
\end{enumerate}
For $0<\varphi<\tfrac{\pi}{2}$ and $r\geq 0$ we set 
\[
\Gamma_{r,\varphi}\coloneq\{\rho e^{i\psi}\;|\; \rho\geq r,\,|\psi|\leq \varphi\}.
\]
An open set $U\subset\C$ is called \emph{postsectorial} (see \cite[p.\ 37]{lumer1999}, \cite[p.\ 150]{lumer2001}) if 
\[
\forall\;0<\varphi<\frac{\pi}{2}\;\exists\;r>0:\;\Gamma_{r,\varphi}\subset U.
\]
\begin{center}
\begin{minipage}{\linewidth}
\centering
\begin{tikzpicture}
\def\mypath{
 (6,2) -- (2,0.66) arc(18.43:-18.43:2.106) -- (6,-2)}
\fill[fill=black!10,draw=black,thick] \mypath;
\draw[dashed,thick]  (0,0) -- (2,0.66);
\draw[dashed,thick]  (0,0) -- (2,-0.66);
\node[anchor=north east] (A) at (5.5,1) {$\Gamma_{r,\varphi}$};
\node[anchor=north east] (B) at (1.7,0.45) {$\varphi$};
\node[anchor=north east] (C) at (1,-0.425) {$r$};
\draw[->] (-2,0) -- (6,0) node[right] {$\re(\lambda)$} coordinate (x axis);
\draw[->] (0,-2) -- (0,2) node[above] {$\im(\lambda)$} coordinate (y axis);
\end{tikzpicture}
\end{minipage}
\captionsetup{type=figure}
\caption{$\Gamma_{r,\varphi}$ for $0<\varphi<\tfrac{\pi}{2}$ and $r\geq 0$ (cf.\ \cite[Figure 1.11, p.\ 111]{kruse2021_1})}
\end{center}
Further, we define the set $H_{\operatorname{exp}}(\mathcal{E})$ of all $\mathcal{E}$-valued holomorphic functions 
$\mathcal{S}\colon\mathcal{G}\to\mathcal{E}$ where $\mathcal{G}$ consists of postsectorial domains and 
\[
\forall\;\alpha\in\mathfrak{A},\,K\in\N,\,0<\varphi<\tfrac{\pi}{2}\;\exists\;r>0:\; 
\Gamma_{r,\varphi}\subset G_{\alpha}
\;\;\text{and}\;\;
\sup_{\lambda\in\Gamma_{r,\varphi}}\|S_{\alpha}(\lambda)\|_{\alpha}e^{-\frac{1}{K}\re(\lambda)}<\infty
\]
(see \cite[Definition 2.6, p.\ 134]{domanskilangenbruch2010}). 
Considering the elements of $H_{\operatorname{exp}}(\mathcal{E})$ as germs near $S_{\infty}$, we note 
that $H_{\operatorname{exp}}(\mathcal{E})$ is a vector space canonically. 

\begin{thm}[{\cite[Theorem 2.4, p.\ 134, Corollary 3.5, p.\ 145]{domanskilangenbruch2010}}]
\label{thm:laplace_on_operators}
Let $E$ be a complete $\C$-lcHs which is the projective limit of a spectrum of 
Banach spaces $\mathcal{E}\coloneq (E_{\alpha})_{\alpha\in\mathfrak{A}}$.
Then the Laplace transform $\mathscr{L}\colon L(H,E)\to H_{\operatorname{exp}}(\mathcal{E})$ is a linear bijection 
such that $\mathscr{L}(\tfrac{d}{d t} T)=\lambda \mathscr{L}(T)$. 
\end{thm}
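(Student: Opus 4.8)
The plan is to use the projective representation $E=\varprojlim_{\alpha}E_{\alpha}$ to reduce the statement to a family of Banach-valued Laplace transforms, and then to treat each Banach component by testing $T$ against the exponential kernels $e_{\lambda}\colon z\mapsto e^{-\lambda z}$. Since $E=\varprojlim_{\alpha}E_{\alpha}$, a continuous linear operator $T\colon H\to E$ is the same as a compatible family $(T_{\alpha})_{\alpha\in\mathfrak{A}}$ of continuous linear operators $T_{\alpha}\coloneq\kappa_{\alpha}^{E}\circ T\colon H\to E_{\alpha}$, compatibility meaning $\kappa_{\alpha}^{\beta}\circ T_{\beta}=T_{\alpha}$ for $\alpha\leq\beta$. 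First I would fix $\alpha$ and exploit the structure $H=\varinjlim_{K}\varprojlim_{k}H_{K,k}$: continuity of $T_{\alpha}$ on the inductive limit amounts to continuity on each step $\varprojlim_{k}H_{K,k}$, and since this step is a Fr\'echet space and $E_{\alpha}$ is a Banach space, $T_{\alpha}$ is dominated by a single seminorm, i.e.\ for every $K$ there are $k=k(\alpha,K)$ and $C>0$ with $\|T_{\alpha}f\|_{\alpha}\leq C\|f\|_{K,k}^{H}$. Hence $T_{\alpha}$ extends continuously to the Banach space $H_{K,k}$, and $e_{\lambda}\in H_{K,k}$ precisely for $\lambda$ in a postsectorial region (those $\lambda$ with $\re(\lambda)$ large relative to $k$ and to $|\im(\lambda)|/K$). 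This is the crucial point that makes the naive kernel $e_{\lambda}$ admissible even though $e_{\lambda}\notin H$.

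With this in hand I would define $\mathscr{L}(T)\coloneq(S_{\alpha})_{\alpha}$ by $S_{\alpha}(\lambda)\coloneq T_{\alpha}(e_{\lambda})$ on the postsectorial domain $G_{\alpha}$ where $e_{\lambda}\in H_{K,k}$. Holomorphy of each $S_{\alpha}$ follows from the holomorphic dependence $\lambda\mapsto e_{\lambda}$ as an $H_{K,k}$-valued map together with continuity of $T_{\alpha}$; the compatibility condition (ii) of a spectral-valued holomorphic function is inherited verbatim from $\kappa_{\alpha}^{\beta}\circ T_{\beta}=T_{\alpha}$; and the growth bound $\sup_{\lambda\in\Gamma_{r,\varphi}}\|S_{\alpha}(\lambda)\|_{\alpha}e^{-\frac{1}{K}\re(\lambda)}<\infty$ defining membership in $H_{\operatorname{exp}}(\mathcal{E})$ comes from $\|S_{\alpha}(\lambda)\|_{\alpha}\leq C\|e_{\lambda}\|_{K,k}^{H}=C\sup_{z\in\Omega_{K}}e^{-\re(\lambda z)+k\re(z)}$, whose supremum is governed by the vertex region of $\Omega_{K}$ near $\re(z)=-\tfrac{1}{K}$ (where $|\im(z)|$ is forced to $0$) and yields exactly the infra-exponential rate $e^{\frac{1}{K}\re(\lambda)}$ on each sector $\Gamma_{r,\varphi}$. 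Linearity is immediate, and the differentiation rule reduces to the short computation $\mathscr{L}(\tfrac{d}{dt}T)(\lambda)=(\tfrac{d}{dt}T)(e_{\lambda})=-T(\tfrac{d}{dz}e_{\lambda})=\lambda T(e_{\lambda})=\lambda\mathscr{L}(T)(\lambda)$, using $\tfrac{d}{dz}e^{-\lambda z}=-\lambda e^{-\lambda z}$ and the convention $\tfrac{d}{dt}T=-T\circ\tfrac{d}{dz}$.

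For injectivity I would argue that the exponentials are total: if $\mathscr{L}(T)=0$ then $T_{\alpha}(e_{\lambda})=0$ for all $\lambda\in G_{\alpha}$ and all $\alpha$, and since any continuous functional on $H_{K,k}$ annihilating every $e_{\lambda}$ has identically vanishing scalar Laplace transform and is therefore zero by uniqueness of the Laplace transform, the finite linear combinations of the $e_{\lambda}$ are dense and $T_{\alpha}=0$ for each $\alpha$, so $T=0$. The main obstacle is surjectivity, which is a Paley--Wiener--Ehrenpreis type representation: given $\mathcal{S}=(S_{\alpha})\in H_{\operatorname{exp}}(\mathcal{E})$ I would recover $T$ by an inverse transform, defining $T_{\alpha}(f)\coloneq\frac{1}{2\pi i}\int_{\Lambda}S_{\alpha}(\lambda)\widetilde{f}(\lambda)\,\d\lambda$ for $f\in H$, with $\Lambda$ a contour inside $G_{\alpha}$ along the boundary of a sector $\Gamma_{r,\varphi}$ and $\widetilde{f}$ an associated transform of the super-exponentially decaying test function $f$. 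The hard part is to show that this integral converges (balancing the infra-exponential growth of $S_{\alpha}$ against the decay of $\widetilde{f}$), that it is independent of the admissible contour by Cauchy's theorem, that $(T_{\alpha})$ is compatible and continuous so that it assembles into $T\in L(H,E)$, and finally that $\mathscr{L}(T)=\mathcal{S}$ via a Fourier-inversion computation. Verifying these convergence and inversion estimates in the precise geometry of the domains $\Omega_{K}$ and the sectors $\Gamma_{r,\varphi}$ is where essentially all of the work lies.
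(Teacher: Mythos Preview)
The paper does not contain a proof of this theorem: it is imported verbatim from \cite[Theorem~2.4 and Corollary~3.5]{domanskilangenbruch2010} and stated without argument, so there is no ``paper's own proof'' to compare your proposal against. What follows immediately after the statement in the paper is only a short remark (with a self-contained elementary proof) explaining why the notion of a postsectorial domain used here coincides with the notion of a conoidal domain used in \cite{domanskilangenbruch2010}; that remark is not a proof of the Laplace-isomorphism theorem itself.

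That said, your outline is a reasonable reconstruction of the strategy in \cite{domanskilangenbruch2010}: factor through the local Banach spaces, use the Grothendieck factorisation of $T_{\alpha}$ through some step $H_{K,k}$ to make sense of $T_{\alpha}(e_{\lambda})$ on a postsectorial region, read off holomorphy, compatibility and the infra-exponential growth, obtain the derivative rule from $e_{\lambda}'=-\lambda e_{\lambda}$, and prove bijectivity via density of exponentials and an inverse contour integral. The place you correctly flag as the real work---the surjectivity/inversion with its contour estimates balancing the growth of $S_{\alpha}$ against the decay coming from the test functions in $H$---is precisely the content of \cite[Section~3]{domanskilangenbruch2010}, and the present paper makes no attempt to reproduce it.
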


\begin{rem}
The definition of $H_{\operatorname{exp}}(\mathcal{E})$ in \cite[Definition 2.6, p.\ 134]{domanskilangenbruch2010} 
is actually phrased with a family $\mathcal{G}$ of conoidal sets. 
An open set $G\subset\C$ is called \emph{conoidal} if for every $K\in\N$ there is $k\in\N$ such that 
\[
V_{K,k}\coloneq\Bigl\{\lambda\in\C\;|\;\re(\lambda)>k+\frac{|\im(\lambda)|}{K}\Bigr\}\subset G
\]
(see \cite[Definition 2.7, p.\ 134]{domanskilangenbruch2010}). We note that an open set $G\subset\C$ is conoidal if 
and only if $G$ is postsectorial. 
\begin{center}
\begin{minipage}{\linewidth}
\centering
\begin{tikzpicture}
\def\mypath{
(6,2) -- (2,0) -- (6,-2)}
\fill[fill=black!10,draw=black,dashed,thick] \mypath;
\draw[black,dotted] (0,-0.5) -- (3,-0.5) -- (3,0.5) -- (0,0.5);
\node[anchor=north east] (A) at (5.5,1) {$V_{K,k}$};
\draw(1pt,-0.5cm)--(-1pt,-0.5cm)node[anchor=north east] {$-K$};
\draw(1pt,0.5cm)--(-1pt,0.5cm)node[anchor=south east] {$K$};
\draw(2cm,1pt)--(2cm,-1pt)node[anchor=north east] {$k$};
\draw(3cm,1pt)--(3cm,-1pt)node[anchor=north west] {$k+1$};
\draw[->] (-1,0) -- (6,0) node[right] {$\re(\lambda)$} coordinate (x axis);
\draw[->] (0,-2) -- (0,2) node[above] {$\im(\lambda)$} coordinate (y axis);
\end{tikzpicture}
\end{minipage}
\captionsetup{type=figure}
\caption{$V_{K,k}$ for $K,k\in\N$}
\end{center}
\end{rem}
\begin{proof}
First, we observe that $V_{K,k}=k+\Gamma_{0,\varphi(K)}$ with $\varphi(K)\coloneq\operatorname{arctan}(K)$ 
for every $K,k\in\N$. Let $G$ be conoidal and $0<\varphi<\tfrac{\pi}{2}$. We choose $K\in\N$ such that 
$\varphi(K)>\varphi$. Then there are $k\in\N$ and $r>0$ with 
$\Gamma_{r,\varphi}\subset k+\Gamma_{0,\varphi(K)}=V_{K,k}\subset G$ because $G$ is conoidal. 

Let $G$ be postsectorial and $K\in\N$. Then $0<\varphi(K)<\tfrac{\pi}{2}$ and there is $r>0$ such that 
$\Gamma_{r,\varphi(K)}\subset G$. We choose $k\in\N$ with $k>r$ and get 
$V_{K,k}=k+\Gamma_{0,\varphi(K)}\subset\Gamma_{r,\varphi(K)}\subset G$.
\end{proof}

Let $E$ be as above and $A\colon F\coloneq D(A)\subset E\to E$ a closed linear operator. 
We equip $F$ with the graph topology $\tau_{A}$, which makes it a complete space. We  
denote by $\kappa_{\alpha}^{\beta}\colon F_{\beta}\to F_{\alpha}$ for $\alpha,\beta\in\mathfrak{A}$ with 
$\alpha\leq\beta$ the linking maps of its local Banach spaces. 
Then $F_{A}=(F,\tau_{A})$ is a projective limit of its local Banach spaces $F_{\alpha}$. By the definition of the 
graph topology the map $A\colon F_{A}\to E$ is continuous and for any $\alpha\in\mathfrak{A}$ 
there are $\beta\in\mathfrak{A}$ and $C_{1}>0$ 
such that $p_{\alpha}(Ax)\leq C_{1}p_{\beta,A}(x)$ for all $x\in F$ 
(e.g.\ any $\beta\in\mathfrak{A}$ with $\alpha\leq\beta$). 
This defines a continuous linear operator 
$A_{\alpha}^{\beta}\colon F_{\beta}\to E_{\alpha}$ as the extension of the continuous linear map 
$F/\ker p_{\beta,A}\to E/\ker p_{\alpha}$, $x+\ker p_{\beta,A}\mapsto Ax+\ker p_{\alpha}$ 
(well-defined because $\ker p_{\beta,A}\subset\ker p_{\alpha}\circ A$). 
Moreover, we call $I_{\alpha}^{\beta}\colon F_{\beta}\to E_{\alpha}$ the continuous linear extension of the map 
$F/\ker p_{\beta,A} \to E/\ker p_{\alpha}$, $x+\ker p_{\beta,A}\mapsto x+\ker p_{\alpha}$, 
for $\alpha\leq\beta$ (well-defined because $\alpha\leq\beta$ implies $\ker p_{\beta,A}\subset\ker p_{\alpha}$). 

\begin{thm}\label{thm:solution_ACP}
Let $E$ be an admissible complete $\C$-lcHs with local Banach spaces 
$\mathcal{E}\coloneq (E_{\alpha})_{\alpha\in\mathfrak{A}}$,
let $A\colon F\coloneq D(A)\subset E\to E$ be a closed linear operator and $F_{A}=(F,\tau_{A})$ admissible 
with local Banach spaces $\mathcal{F}\coloneq (F_{\alpha})_{\alpha\in\mathfrak{A}}$. 
For $x_{0}\in E$ the following are equivalent:
\begin{enumerate}
\item[(a)] The ACP \eqref{eq:ACP} has a solution (in the sense of hyperfunctions).
\item[(b)] There is a spectral-valued holomorphic function 
$\mathcal{S}\coloneq (S_{\alpha})_{\alpha\in\mathfrak{A}}\in H_{\operatorname{exp}}(\mathcal{F})$ such that for any 
$\alpha\in\mathfrak{A}$ there is $\beta\in\mathfrak{A}$, $\alpha\leq\beta$, such that 
\begin{equation}\label{eq:solution_ACP_1}
s_{\alpha}^{\beta}\colon G_{\beta}\to E_{\alpha},\; 
s_{\alpha}^{\beta}(\lambda)\coloneq (\lambda I_{\alpha}^{\beta}-A_{\alpha}^{\beta})S_{\beta}(\lambda)
 -\kappa_{\alpha}^{E}(x_{0}),
\end{equation}
is well-defined and 
\begin{equation}\label{eq:solution_ACP_2}
\forall\;j\in\N,\,0<\varphi<\tfrac{\pi}{2}\;\exists\;r>0:\; 
\Gamma_{r,\varphi}\subset G_{\beta}
\;\;\text{and}\;\;
\sup_{\lambda\in\Gamma_{r,\varphi}}\|s_{\alpha}^{\beta}(\lambda)\|_{\alpha}
e^{j\re(\lambda)-\frac{1}{j}|\im(\lambda)|}<\infty.
\end{equation}
\end{enumerate}
\end{thm}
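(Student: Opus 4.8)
The plan is to transport the abstract Cauchy problem through the two Laplace transforms, so that being a solution becomes a purely function-theoretic condition on the right halfplane, and then to match that condition with the spectral data in $H_{\operatorname{exp}}(\mathcal{F})$. The starting point is the identity $\mathcal{L}^{\mathcal{B}}(x_{0}\otimes\delta_{0})=[x_{0}]$, the class of the constant function $z\mapsto x_{0}$ in $\mathcal{LO}_{[0,\infty]}(E)/\mathcal{LO}_{\{\infty\}}(E)$ (the asymptotic Laplace transform of the Dirac hyperfunction is the constant $1$ by the residue computation on $\gamma_{[0,\infty]}$), combined with the homomorphism property $\mathcal{L}^{\mathcal{B}}(\tfrac{d}{dt}[u]-A[u])=(z-A)\mathcal{L}^{\mathcal{B}}([u])$ already used in \prettyref{thm:unique_prop_equiv}. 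Hence $[u]\in\mathcal{B}([0,\infty[,F_{A})$ solves \eqref{eq:ACP_hyp} if and only if some (equivalently, any) representative $h\in\mathcal{LO}_{[0,\infty]}(F_{A})$ of $\mathcal{L}^{\mathcal{B}}([u])$ satisfies $v\coloneq(z-A)h-x_{0}\in\mathcal{LO}_{\{\infty\}}(E)$.

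For (a) $\Rightarrow$ (b) I would take such a representative $h$ and set $S_{\alpha}\coloneq\kappa_{\alpha}^{F}\circ h$ on $G_{\alpha}\coloneq\C_{\re>0}$. Holomorphy and the compatibility $\kappa_{\alpha}^{\beta}\circ S_{\beta}=S_{\alpha}$ are immediate from $\kappa_{\alpha}^{\beta}\circ\kappa_{\beta}^{F}=\kappa_{\alpha}^{F}$, so $\mathcal{S}\coloneq(S_{\alpha})_{\alpha}$ is spectral-valued. The $H_{\operatorname{exp}}(\mathcal{F})$ growth bound and \eqref{eq:solution_ACP_2} then both follow from the sector estimate $|z|\leq\re(z)/\cos\varphi$ on $\Gamma_{r,\varphi}$: inserting it into $p_{\alpha,A}(h(z))\leq\|h\|_{k,\alpha,[0,\infty]}e^{|z|/k}$ and $p_{\alpha}(v(z))\leq\|v\|_{k,\alpha,\{\infty\}}e^{|z|/k-k\re(z)}$ converts growth and decay in $|z|$ into growth and decay in $\re(\lambda)$, and choosing $k$ large (depending on $K,\varphi$, resp.\ $j,\varphi$) makes the resulting exponents nonpositive. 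Since $s_{\alpha}^{\beta}=\kappa_{\alpha}^{E}\circ v$ by the definitions of $I_{\alpha}^{\beta}$ and $A_{\alpha}^{\beta}$, this yields \eqref{eq:solution_ACP_1}–\eqref{eq:solution_ACP_2} for any $\beta\geq\alpha$ large enough that $A_{\alpha}^{\beta}$ is defined.

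For (b) $\Rightarrow$ (a) the spectral data live only on the shrinking postsectorial domains $G_{\alpha}$, with no common sector on which all $S_{\alpha}$ are simultaneously defined, so one cannot directly read off an $F_{A}$-valued function; this is exactly where the Laplace hyperfunction machinery enters. I would invert \prettyref{thm:laplace_on_operators} for the complete space $F_{A}=\lim_{\leftarrow}F_{\alpha}$ to obtain $T\in L(H,F_{A})$ with $\mathscr{L}(T)=\mathcal{S}$, and let $[u]\in\mathcal{B}([0,\infty[,F_{A})$ be the hyperfunction obtained by restricting the Laplace hyperfunction $T$. Using $\mathscr{L}(\tfrac{d}{dt}T)=\lambda\mathscr{L}(T)$ and the spectral description of $A\circ T\in L(H,E)$, the family $(s_{\alpha}^{\beta})$ is precisely the Laplace transform of the $E$-valued Laplace hyperfunction $g\coloneq\tfrac{d}{dt}T-AT-x_{0}\otimes\delta_{0}$. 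The decay \eqref{eq:solution_ACP_2}, being the $\mathcal{LO}_{\{\infty\}}$-condition read on sectors, should force $[g]=\tfrac{d}{dt}[u]-A[u]-x_{0}\otimes\delta_{0}$ to have vanishing asymptotic Laplace transform, whence $[g]=0$ by \prettyref{thm:asymp_laplace} and $[u]$ solves \eqref{eq:ACP_hyp}.

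The hard part is this last step: making rigorous that super-exponential decay of $\mathscr{L}(g)$ in $\re(\lambda)$ on sectors translates into $\mathcal{L}^{\mathcal{B}}([g])=0$. This demands the compatibility between the Laplace transform $\mathscr{L}$ for Laplace hyperfunctions of \cite{domanskilangenbruch2010} and the asymptotic transform $\mathcal{L}^{\mathcal{B}}$ of \cite{kruse2021_1} — namely that restriction of a Laplace hyperfunction to an ordinary hyperfunction corresponds to passing from the spectral germ near $S_{\infty}$ to its class in $\mathcal{LO}_{[0,\infty]}/\mathcal{LO}_{\{\infty\}}$ — together with the observation that the $\{\infty\}$-class is exactly the kernel of this germ map. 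Establishing this compatibility and the well-definedness of the restriction $L(H,F_{A})\to\mathcal{B}([0,\infty[,F_{A})$ is the technical heart of the argument; once it is in place, the sectorial estimates in both directions are routine, and the identification $\mathcal{L}^{\mathcal{B}}(x_{0}\otimes\delta_{0})=[x_{0}]$ supplies the inhomogeneous term.
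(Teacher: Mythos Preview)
Your (a)$\Rightarrow$(b) is essentially the paper's argument: set $G_{\alpha}=\C_{\re>0}$, $S_{\alpha}=\kappa_{\alpha}^{F}\circ h$, recognise $s_{\alpha}^{\beta}=\kappa_{\alpha}^{E}\circ v$ with $v\in\mathcal{LO}_{\{\infty\}}(E)$, and convert the $\mathcal{LO}$-bounds into sector bounds. That part is fine.

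For (b)$\Rightarrow$(a) you have the right skeleton --- invert $\mathscr{L}$ to produce $T\in L(H,F_{A})$ and an $E$-valued error $g$, then show the error contributes zero at the hyperfunction level --- and you correctly flag the ``compatibility'' between $\mathscr{L}$ and $\mathcal{L}^{\mathcal{B}}$ as the crux. But you leave it as a black box, and this is exactly where the paper supplies concrete machinery rather than an abstract compatibility lemma. Three points:

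\medskip
\noindent\textbf{1. Patching the $s_{\alpha}^{\beta}$.} Before you can apply $\mathscr{L}^{-1}$ to the error side, you need a single spectral-valued function $\widetilde{s}\in H_{\operatorname{exp}}(\mathcal{E})$. The data in (b) give you maps $s_{\alpha}^{\beta}$ indexed by \emph{pairs} $(\alpha,\beta)$; the paper spends effort showing these agree on overlaps (via the compatibility of $\mathcal{S}$ and the density of $F/\ker p_{\beta,A}$ in $F_{\beta}$) so as to define $\widetilde{s}_{\alpha}$ on enlarged domains $\widetilde{G}_{\alpha}=\bigcup_{\beta\in M_{\alpha}}G_{\beta}$, and then checks $\widetilde{s}$ is itself compatible. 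You do not address this.

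\medskip
\noindent\textbf{2. The restriction map.} The paper does not invoke an abstract restriction $L(H,F_{A})\to\mathcal{B}([0,\infty[,F_{A})$. It writes it down explicitly: for $\lambda\notin[0,\infty[$ set $f_{\lambda}(t)=\tfrac{-1}{2\pi i}\tfrac{e^{(t-\lambda)^{2}}}{t-\lambda}\in H$ and $u_{T}(\lambda)=\langle T,f_{\lambda}\rangle$. Then $u_{T}\in\mathcal{O}(\C\setminus[0,\infty[,F_{A})$, and the algebraic identities $\tfrac{d}{d\lambda}f_{\lambda}=-\tfrac{d}{dt}f_{\lambda}$, $f_{\lambda}(0)=-f_{0}(\lambda)$ translate $(\tfrac{d}{dt}-A)T=x_{0}\otimes\delta_{0}+\widetilde{T}$ into $(\tfrac{d}{d\lambda}-A)u_{T}=x_{0}\otimes(-f_{0})+u_{\widetilde{T}}$ pointwise in $\lambda$.

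\medskip
\noindent\textbf{3. Killing the error.} Rather than showing $\mathcal{L}^{\mathcal{B}}([u_{\widetilde{T}}])=0$, the paper shows directly that $u_{\widetilde{T}}$ extends to an entire $E$-valued function, so $[u_{\widetilde{T}}]=0$ in $\mathcal{B}([0,\infty[,E)$. The mechanism is a translation trick: the decay \eqref{eq:solution_ACP_2} means $e^{j(\cdot)}\widetilde{s}\in H_{\operatorname{exp}}(\mathcal{E})$ for every $j$, so there exist $\widetilde{T}_{j}\in L(H,E)$ with $\mathscr{L}(\widetilde{T}_{j})=e^{j(\cdot)}\widetilde{s}$; since $\mathscr{L}(\tau_{-j}R)=e^{-j(\cdot)}\mathscr{L}(R)$, one gets $\widetilde{T}=\tau_{-j}\widetilde{T}_{j}$ and hence $u_{\widetilde{T}}(\lambda)=\langle\widetilde{T}_{j},f_{\lambda-j}\rangle$, which is holomorphic for $\lambda\notin[j,\infty[$. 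Letting $j\to\infty$ gives holomorphy on all of $\C$.

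\medskip
So the paper never proves the $\mathscr{L}/\mathcal{L}^{\mathcal{B}}$ compatibility you propose; it sidesteps it with the explicit $f_{\lambda}$ and the translation argument. Your route could in principle be made to work, but as written it defers the entire technical content to an unproved compatibility statement.
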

\begin{proof}
(a) $\Rightarrow$ (b): Let $[u]\in\mathcal{B}([0,\infty[,F_{A})$ be a solution of \eqref{eq:ACP_hyp} and 
$[h]\coloneq\mathcal{L}^{\mathcal{B}}([u])\in\mathcal{LO}_{[0,\infty]}(F_{A})/\mathcal{LO}_{\{\infty\}}(F_{A})$. 
It follows that
\[
(\lambda-A)[h]=\mathcal{L}^{\mathcal{B}}(x_{0}\otimes \delta_{0})=[x_0]
\]
in $\mathcal{LO}_{[0,\infty]}(E)/\mathcal{LO}_{\{\infty\}}(E)$. 
We set $G_{\alpha}\coloneq\C_{\re >0}$ and $S_{\alpha}\coloneq\kappa_{\alpha}^{F}\circ h$ for $\alpha\in\mathfrak{A}$. 
Then there is $f\in\mathcal{LO}_{\{\infty\}}(E)$ such that with $\beta=\alpha$
\begin{align*}
\kappa_{\alpha}^{E}(x_{0})+s_{\alpha}^{\alpha}(\lambda)
&=(\lambda I_{\alpha}^{\alpha}-A_{\alpha}^{\alpha})S_{\alpha}(\lambda)
 =(\lambda I_{\alpha}^{\alpha}-A_{\alpha}^{\alpha})\kappa_{\alpha}^{F}(h(\lambda))\\
&= (\lambda I_{\alpha}^{\alpha}-A_{\alpha}^{\alpha})(h(\lambda)+\ker p_{\alpha,A})
 = \lambda h(\lambda)-Ah(\lambda)+\ker p_{\alpha}\\
&=(\lambda-A)h(\lambda)+\ker p_{\alpha}
 =x_{0}+f(\lambda)+\ker p_{\alpha}
 =\kappa_{\alpha}^{E}(x_{0})+f(\lambda)
\end{align*}
and thus $s_{\alpha}^{\alpha}(\lambda)=f(\lambda)$ for $\lambda\in G_{\alpha}$. 
Let $j\in\N$ and $0<\varphi<\tfrac{\pi}{2}$. 
We note that $\Gamma_{r,\varphi}\subset G_{\alpha}$ for any $r>0$ and with $k\in\N$ such that $\tfrac{1}{k}\leq r$ and 
$k\geq j$ we obtain
\begin{align*}
\sup_{\lambda\in\Gamma_{r,\varphi}}\|s_{\alpha}^{\alpha}(\lambda)\|_{\alpha}
 e^{j\re(\lambda)-\frac{1}{j}|\im(\lambda)|}
&\leq \sup_{\re(\lambda)\geq\frac{1}{k}}p_{\alpha}(f(\lambda))
 e^{j|\re(\lambda)|-\frac{1}{j}|\lambda|+\frac{1}{j}|\re(\lambda)|}\\
&\leq \sup_{\re(\lambda)\geq\frac{1}{k}}p_{\alpha}(f(\lambda))
 e^{-\frac{1}{j+1}|\lambda|+(j+1)|\re(\lambda)|}
 \leq |f|_{k+1,\alpha,\{\infty\}}.
\end{align*}
(b) $\Rightarrow$ (a): First, we observe that for $\alpha,\beta\in\mathfrak{A}$ with $\alpha\leq\beta$ the map 
$s_{\alpha}^{\beta}\colon G_{\beta}\to E_{\alpha}$ is well-defined by our considerations above this theorem. 
In addition, $s_{\alpha}^{\beta}$ is holomorphic on $G_{\beta}$ because $I_{\alpha}^{\beta}$ and $A_{\alpha}^{\beta}$ 
are linear and continuous and $S_{\beta}$ holomorphic by (i). We observe that for $\lambda\in G_{\beta}$ 
there is $(h_{n}(\lambda))_{n\in\N}$ in $F$ such that 
$S_{\beta}(\lambda)=\lim_{n\to\infty}(h_{n}(\lambda)+\ker p_{\beta,A})$ in $F_{\beta}$ and 
\begin{equation}\label{eq:solution_ACP_3}
 s_{\alpha}^{\beta}(\lambda)+\kappa_{\alpha}^{E}(x_{0})
=(\lambda I_{\alpha}^{\beta}-A_{\alpha}^{\beta})S_{\beta}(\lambda)
=\lim_{n\to\infty}((\lambda-A) h_{n}(\lambda)+\ker p_{\alpha}).
\end{equation}

Now, we want to construct an $\mathcal{E}$-valued holomorphic function 
$\widetilde{s}$ on a suitable family $\widetilde{G}$ of postsectorial domains using our maps 
$s_{\alpha}^{\beta}$. For $\alpha\in\mathfrak{A}$ we set 
\[
M_{\alpha}\coloneq\{\beta\in\mathfrak{A}\;|\;\alpha\leq\beta\;\text{and}\;\eqref{eq:solution_ACP_2} 
 \;\text{is satisfied} \}
 \quad\text{and}\quad
 \widetilde{G}_{\alpha}\coloneq\bigcup_{\beta\in M_{\alpha}}G_{\beta}.
\]
The sets $\widetilde{G}_{\alpha}\subset\C$ are non-empty by assumption as well as open, connected and postsectorial 
as they are unions of such sets. 
Next, we show that $M_{\alpha}\subset M_{\gamma}$ for $\alpha,\gamma\in\mathfrak{A}$ with $\gamma\leq\alpha$, 
which then implies $\widetilde{G}_{\alpha}\subset \widetilde{G}_{\gamma}$ and 
means that $\widetilde{\mathcal{G}}\coloneq (\widetilde{G}_{\alpha})_{\alpha\in\mathfrak{A}}$ is directed. 
Let $\beta\in M_{\alpha}$. Then $\gamma\leq\beta$ and it holds by \eqref{eq:solution_ACP_3} that
\begin{align*}
 \kappa_{\gamma}^{\alpha}(s_{\alpha}^{\beta}(\lambda))+\kappa_{\gamma}^{E}(x_{0})
&=\kappa_{\gamma}^{\alpha}(s_{\alpha}^{\beta}(\lambda)+\kappa_{\alpha}^{E}(x_{0}))
 =\lim_{n\to\infty}\kappa_{\gamma}^{\alpha}((\lambda-A) h_{n}(\lambda)+\ker p_{\alpha} )\\
&=\lim_{n\to\infty}((\lambda-A) h_{n}(\lambda)+\ker p_{\gamma})
 =s_{\gamma}^{\beta}(\lambda)+\kappa_{\gamma}^{E}(x_{0})
\end{align*}
and thus $\kappa_{\gamma}^{\alpha}(s_{\alpha}^{\beta}(\lambda))=s_{\gamma}^{\beta}(\lambda)$ 
for $\lambda\in G_{\beta}$. We deduce that there is $C_{1}>0$ such that 
$\|s_{\gamma}^{\beta}(\lambda)\|_{\gamma}\leq C_{1} \|s_{\alpha}^{\beta}(\lambda)\|_{\alpha}$ 
for any $\lambda\in G_{\beta}$ from the continuity of $\kappa_{\gamma}^{\alpha}\colon E_{\alpha}\to E_{\gamma}$.
Therefore $s_{\gamma}^{\beta}$ satisfies the estimate \eqref{eq:solution_ACP_2} with $\alpha$ replaced by $\gamma$, 
which means that $\beta\in M_{\gamma}$. 

Now, let $\beta_{1},\beta_{2}\in M_{\alpha}$. Then $\emptyset\neq(G_{\beta_{1}}\cap G_{\beta_{2}})\subset G_{\alpha}$ 
as $\alpha\leq\beta_{1},\beta_{2}$ and $\mathcal{G}$ is directed and consists of postsectorial sets. 
For $\lambda\in G_{\beta_{1}}\cap G_{\beta_{2}}$ there are $(h_{i,n}(\lambda))_{n\in\N}$ in $F$ such that 
$S_{\beta_{i}}(\lambda)=\lim_{n\to\infty}(h_{i,n}(\lambda)+\ker p_{\beta_{i},C})$ in $F_{\beta_{i}}$ and 
\[
 s_{\alpha}^{\beta_{i}}(\lambda)+\kappa_{\alpha}^{E}(x_{0})
=\lim_{n\to\infty}((\lambda-A) h_{i,n}(\lambda)+\ker p_{\alpha})
\]
by \eqref{eq:solution_ACP_3} for $i=1,2$. Due to the compatibility (ii) for $\mathcal{S}$ we get 
\[
 S_{\alpha}(\lambda)
\underset{(ii)}{=}(\kappa_{\alpha}^{\beta_{i}}\circ S_{\beta_{i}})(\lambda)
 =\lim_{n\to\infty}(h_{i,n}(\lambda)+\ker p_{\alpha,A})
\]
for $i=1,2$, which yields $\lim_{n\to\infty}(h_{1,n}(\lambda)-h_{2,n}(\lambda)+\ker p_{\alpha,A})=0$ in $F_{\alpha}$ 
and thus in $(F/\ker p_{\alpha,A})$ as well. It follows that $\lim_{n\to\infty}(h_{1,n}(\lambda)-h_{2,n}(\lambda))\in 
\ker p_{\alpha,A}$ and so
\[
 s_{\alpha}^{\beta_{1}}(\lambda)-s_{\alpha}^{\beta_{2}}(\lambda)
= (\lambda-A)(\lim_{n\to\infty}(h_{1,n}(\lambda)-h_{2,n}(\lambda)))+\ker p_{\alpha}=\ker p_{\alpha},
\]
implying $s_{\alpha}^{\beta_{1}}=s_{\alpha}^{\beta_{2}}$ on $G_{\beta_{1}}\cap G_{\beta_{2}}$. 
Therefore the map $\widetilde{s}_{\alpha}\colon \widetilde{G}_{\alpha}\to E_{\alpha}$ given by 
$\widetilde{s}_{\alpha}\coloneq s_{\alpha}^{\beta}$ on $G_{\beta}$ for $\beta\in M_{\alpha}$ is well-defined 
and holomorphic on $\widetilde{G}_{\alpha}$. 
This gives us (i) for 
$\widetilde{s}\coloneq (\widetilde{s}_{\alpha})_{\alpha\in\mathfrak{A}}\colon \widetilde{\mathcal{G}}\to\mathcal{E}$.

Let us turn to the compatibility condition (ii) for $\widetilde{s}$. 
Let $\alpha,\gamma\in\mathfrak{A}$ with $\alpha\leq\gamma$. Then for any $\beta\in M_{\gamma}\subset M_{\alpha}$ 
and $\lambda\in G_{\beta}$ we have by \eqref{eq:solution_ACP_3}
\begin{align*}
 (\kappa_{\alpha}^{\gamma}\circ\widetilde{s}_{\gamma})(\lambda)
&=(\kappa_{\alpha}^{\gamma}\circ s_{\gamma}^{\beta})(\lambda)
 =\lim_{n\to\infty}\kappa_{\alpha}^{\gamma}((\lambda-A) h_{n}(\lambda)-x_{0}+\ker p_{\gamma})\\
&=\lim_{n\to\infty}((\lambda-A) h_{n}(\lambda)-x_{0}+\ker p_{\alpha})
 =s_{\alpha}^{\beta}(\lambda)
 =\widetilde{s}_{\alpha}(\lambda)
\end{align*}
and we conclude that $\widetilde{s}$ fulfils (ii) and is an $\mathcal{E}$-valued holomorphic function. 

Let $\alpha\in\mathfrak{A}$, $K\in\N$ and $0<\varphi<\tfrac{\pi}{2}$ and choose $\beta\in M_{\alpha}$. 
Due to \eqref{eq:solution_ACP_2} for $j=2$ there is $r>0$ such that 
$\Gamma_{r,\varphi}\subset G_{\beta}\subset \widetilde{G}_{\alpha}$ .
We observe that for $\lambda\in\Gamma_{r,\varphi}$ it holds that $\re(\lambda)>0$ and 
\begin{align*}
 -\frac{1}{K}\re(\lambda)
&\leq \frac{1}{2}|\im(\lambda)|-\frac{1}{2}|\im(\lambda)|
 \leq \frac{\operatorname{arctan}(\varphi)}{2}\re(\lambda)-\frac{1}{2}|\im(\lambda)|\\
&\leq 2\re(\lambda)-\frac{1}{2}|\im(\lambda)|,
\end{align*} 
which implies
\[
 \sup_{\lambda\in\Gamma_{r,\varphi}}\|\widetilde{s}_{\alpha}(\lambda)\|_{\alpha}e^{-\frac{1}{K}\re(\lambda)}
\leq\sup_{\lambda\in\Gamma_{r,\varphi}}\|s_{\alpha}^{\beta}(\lambda)\|_{\alpha}
 e^{2\re(\lambda)-\frac{1}{2}|\im(\lambda)|}<\infty.
\]
We conclude that $\widetilde{s}\in H_{\operatorname{exp}}(\mathcal{E})$. 

By \prettyref{thm:laplace_on_operators} and the definition of $\widetilde{s}$ 
in connection with \eqref{eq:solution_ACP_1}
there are $T\in L(H,F_{A})$ and $\widetilde{T}\in L(H,E)$ such that $\mathscr{L}(T)=\mathcal{S}$ and 
$\mathscr{L}(\widetilde{T})=\widetilde{s}$ as well as 
\begin{equation}\label{eq:solution_ACP_4}
\Bigl(\frac{d}{dt}-A\Bigr)T=x_{0}\otimes\delta_{0}+\widetilde{T}
\end{equation}
where $\delta_{0}$ is the Dirac distribution, i.e.\ $\delta_{0}(f)\coloneq f(0)$, and 
$(x_{0}\otimes\delta_{0})(f)\coloneq x_{0}f(0)$ for $f\in H$. As in \cite[Theorem 7.6, p.\ 65]{langenbruch2011} 
we translate this equation from Laplace hyperfunctions to hyperfunctions using the functions 
$f_{\lambda}(t)\coloneq\tfrac{-1}{2\pi i}\tfrac{e^{(t-\lambda)^{2}}}{t-\lambda}$ for $\lambda\notin [0,\infty[$. 
Since $f_{\lambda}\in H$ (for every $\lambda\notin [0,\infty[$ there is $K\in\N$ such that $\lambda\notin\Omega_{K}$), 
the functions
\[
u_{T}\colon\C\setminus[0,\infty[\to F_{A},\; u_{T}(\lambda)\coloneq\langle T,f_{\lambda}\rangle,  
\]
and analogously $u_{\widetilde{T}}\colon\C\setminus[0,\infty[\to E$ are defined. 
The difference quotients of $f_{\lambda}$ w.r.t.\ $\lambda$ converge in $H$, which yields that $u_{T}$ is 
holomorphic and 
\[
 \frac{d}{d\lambda} u_{T}(\lambda)
=\Bigl\langle T,\frac{d}{d\lambda}f_{\lambda}\Bigr\rangle
=\Bigl\langle T,-\frac{d}{d t}f_{\lambda}\Bigr\rangle
=\Bigl\langle \frac{d}{d t}T, f_{\lambda}\Bigr\rangle,\quad\lambda\in \C\setminus[0,\infty[.
\]
Hence we get for $\lambda\in \C\setminus[0,\infty[$
\begin{align*}
 \Bigl(\frac{d}{d\lambda}-A\Bigr) u_{T}(\lambda)
&=\Bigl\langle \bigl(\frac{d}{d t}-A\bigr)T, f_{\lambda}\Bigr\rangle
 \underset{\eqref{eq:solution_ACP_4}}{=}\langle x_{0}\otimes\delta_{t=0}+\widetilde{T},f_{\lambda}\rangle
 =x_{0}\tfrac{-1}{2\pi i}\tfrac{e^{\lambda^{2}}}{-\lambda}+u_{\widetilde{T}}(\lambda)\\
&=-x_{0}f_{0}(\lambda)+u_{\widetilde{T}}(\lambda)
 =(x_{0}\otimes (-f_{0}))(\lambda)+u_{\widetilde{T}}(\lambda).
\end{align*}
Since $[-f_{0}]=-\delta_{0}$ in $\mathcal{B}([0,\infty[)$ by \cite[4.11 Example, p.\ 96]{kruse2021_1}, 
we only need to show that $u_{\widetilde{T}}\in\mathcal{O}(\C,E)$ because 
then $-[u_{T}]\in\mathcal{B}([0,\infty[,F_{A})$ is a solution of the ACP \eqref{eq:ACP}. 
Now, we repeat the argument from \cite[Theorem 7.6, p.\ 65]{langenbruch2011}. 
For $j\in\N$ and $R\in L(H,E)$ we set 
$\langle \tau_{-j}R, f\rangle\coloneq\langle R, f(\cdot+j)\rangle$ for $f\in H$. Then 
\begin{equation}\label{eq:solution_ACP_5}
\mathscr{L}(\tau_{-j}R)=e^{-j(\cdot)}\mathscr{L}(R)
\end{equation}
by the definition of the Laplace transform $\mathscr{L}$ in \cite[p.\ 133--134]{domanskilangenbruch2010}. It follows 
from \eqref{eq:solution_ACP_2} that $e^{j(\cdot)}\widetilde{s}\in H_{\operatorname{exp}}(\mathcal{E})$ 
and thus there exists $\widetilde{T}_{j}\in L(H,E)$ such that $\mathscr{L}(\widetilde{T}_{j})=e^{j(\cdot)}\widetilde{s}$ 
by \prettyref{thm:laplace_on_operators}, implying 
\[
 \mathscr{L}(\tau_{-j}\widetilde{T}_{j})
\underset{\eqref{eq:solution_ACP_5}}{=}e^{-j(\cdot)}\mathscr{L}(\widetilde{T}_{j})
=\widetilde{s}
=\mathscr{L}(\widetilde{T})
\]
and therefore $\tau_{-j}\widetilde{T}_{j}=\widetilde{T}$ by \prettyref{thm:laplace_on_operators} again. 
We deduce for any $j\in\N$ that 
\[
 u_{\widetilde{T}}(\lambda)
=\langle\tau_{-j}\widetilde{T}_{j},f_{\lambda}\rangle 
=\langle\widetilde{T}_{j},f_{\lambda}(\cdot+j)\rangle 
=\langle\widetilde{T}_{j},f_{\lambda-j}\rangle 
\]
is holomorphic for $\lambda\notin [j,\infty[$ because $\widetilde{T}_{j}\in L(H,E)$, which proves our statement.
\end{proof}

Our next goal is to generalise Langenbruch's sufficient criterion \cite[Theorem 7.7, p.\ 66]{langenbruch2011} 
for the solvability of the ACP \eqref{eq:ACP}, which is done by using a suitable notion of an asymptotic right
resolvent. Let $E$ be a complete $\C$-lcHs and $A\colon F\coloneq D(A)\subset E\to E$ a closed linear operator. 
If $E$ is bornological, i.e.\
\[
E= \lim\limits_{\substack{\longrightarrow\\\mathscr{B}\in \mathfrak{B}^{E}}}\,E_{\mathscr{B}}
\] 
where $\mathfrak{B}^{E}$ is the system of bounded closed absolutely convex subsets of $E$ and 
$E_{\mathscr{B}}\coloneq\operatorname{span}(\mathscr{B})$ equipped with the gauge norm induced 
by $\mathscr{B}\in \mathfrak{B}^{E}$, then the topological identities
\[
 L_{b}(E,E)
=\lim\limits_{\substack{\longleftarrow\\(\mathscr{B},\alpha)\in \mathfrak{B}^{E}\times\mathfrak{A}}}L(E_{\mathscr{B}},E_{\alpha})
 \quad\text{and}\quad
 L_{b}(E,F_{A})
=\lim\limits_{\substack{\longleftarrow\\(\mathscr{B},\alpha)\in \mathfrak{B}^{E}\times\mathfrak{A}}}L(E_{\mathscr{B}},F_{\alpha})
\]
hold by \cite[p.\ 136--137]{domanskilangenbruch2010}. 
This means that the local Banach spaces of $L_{b}(E,E)$ and $L_{b}(E,F_{A})$ are the spaces $L(E_{\mathscr{B}},E_{\alpha})$ 
and $L(E_{\mathscr{B}},F_{\alpha})$ equipped with the operator norm, respectively. 
We set $\mathfrak{C}\coloneq\mathfrak{B}^{E}\times\mathfrak{A}$. A spectral-valued holomorphic operator function 
\[
\mathcal{R}\coloneq (R_{\mathscr{A},\alpha})_{(\mathscr{A},\alpha)\in\mathfrak{C}}\colon 
\mathcal{G}\coloneq (G_{\mathscr{A},\alpha})_{(\mathscr{A},\alpha)\in\mathfrak{C}}
\to \mathcal{L}(E,F_{A})\coloneq (L(E_{\mathscr{A}},F_{\alpha}))_{(\mathscr{A},\alpha)\in\mathfrak{C}}
\]
is called an \emph{asymptotic right resolvent} if $\mathcal{R}\in H_{\operatorname{exp}}(\mathcal{L}(E,F_{A}))$ and 
if there is a spectral-valued holomorphic function
\[
\mathcal{T}\coloneq (T_{\mathscr{A},\alpha})_{(\mathscr{A},\alpha)\in\mathfrak{C}}
\colon \widetilde{\mathcal{G}}\coloneq (\widetilde{G}_{\mathscr{A},\alpha})_{(\mathscr{A},\alpha)\in\mathfrak{C}}
\to\mathcal{L}(E)\coloneq (L(E_{\mathscr{A}},E_{\alpha}))_{(\mathscr{A},\alpha)\in\mathfrak{C}}
\]
such that for any $(\mathscr{A},\alpha)\in\mathfrak{C}$ 
there is $(\mathscr{B},\beta)\in\mathfrak{C}$, $(\mathscr{A},\alpha)\leq(\mathscr{B},\beta)$, such that
\begin{equation}\label{eq:asymp_right_res_1}
 (\lambda I_{\alpha}^{\beta}-A_{\alpha}^{\beta})R_{\mathscr{B},\beta}(\lambda)
={\kappa_{\alpha}^{E}}_{\mid E_{\mathscr{B}}}+T_{\mathscr{B},\alpha}(\lambda), 
 \quad \lambda\in G_{\mathscr{B},\beta}\cap \widetilde{G}_{\mathscr{B},\alpha},
\end{equation}
and for any $j\in\N$ and any $0<\varphi<\tfrac{\pi}{2}$ there is $r>0$ with 
$\Gamma_{r,\varphi}\subset (G_{\mathscr{B},\beta}\cap \widetilde{G}_{\mathscr{B},\alpha})$ and
\begin{equation}\label{eq:asymp_right_res_2}
\sup_{\lambda\in\Gamma_{r,\varphi}}\|T_{B,\alpha}(\lambda)\|_{L(E_{\mathscr{B}},E_{\alpha})}
e^{j\re(\lambda)-\frac{1}{j}|\im(\lambda)|}<\infty.
\end{equation}

\begin{thm}\label{thm:solution_ACP_sufficient}
Let $E$ be an admissible complete bornological $\C$-lcHs and $A\colon F\coloneq D(A)\subset E\to E$ a closed linear 
operator and $F_{A}=(F,\tau_{A})$ admissible. The ACP \eqref{eq:ACP} has a solution 
(in the sense of hyperfunctions) for any $x_{0}\in E$ if $A$ admits an asymptotic right resolvent.
\end{thm}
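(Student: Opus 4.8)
The plan is to verify condition (b) of \prettyref{thm:solution_ACP} by feeding the initial value $x_{0}$ into the asymptotic right resolvent. Since $E$ is bornological, the singleton $\{x_{0}\}$ is bounded, so there is $\mathscr{A}_{0}\in\mathfrak{B}^{E}$ with $x_{0}\in E_{\mathscr{A}_{0}}$; write $\widetilde{x}_{0}$ for $x_{0}$ regarded as an element of the local Banach space $E_{\mathscr{A}_{0}}$. I would then define the candidate spectral-valued holomorphic function $\mathcal{S}\coloneq(S_{\alpha})_{\alpha\in\mathfrak{A}}$ on the directed family $\mathcal{G}\coloneq(G_{\mathscr{A}_{0},\alpha})_{\alpha\in\mathfrak{A}}$ by
\[
S_{\alpha}(\lambda)\coloneq R_{\mathscr{A}_{0},\alpha}(\lambda)\widetilde{x}_{0}\in F_{\alpha},\quad \lambda\in G_{\mathscr{A}_{0},\alpha}.
\]
Each $S_{\alpha}$ is holomorphic as the composition of the holomorphic operator function $R_{\mathscr{A}_{0},\alpha}$ with the continuous evaluation at $\widetilde{x}_{0}$, and the compatibility (ii) of $\mathcal{R}$ in the second coordinate (postcomposition with the linking map $\kappa_{\alpha}^{\gamma}\colon F_{\gamma}\to F_{\alpha}$) yields $\kappa_{\alpha}^{\gamma}\circ S_{\gamma}=S_{\alpha}$ on $G_{\mathscr{A}_{0},\gamma}$ for $\alpha\leq\gamma$. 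Since $\mathcal{R}\in H_{\operatorname{exp}}(\mathcal{L}(E,F_{A}))$, the estimate $\|S_{\alpha}(\lambda)\|_{\alpha}\leq\|R_{\mathscr{A}_{0},\alpha}(\lambda)\|_{L(E_{\mathscr{A}_{0}},F_{\alpha})}\|\widetilde{x}_{0}\|_{E_{\mathscr{A}_{0}}}$ transports the exponential growth bound, giving $\mathcal{S}\in H_{\operatorname{exp}}(\mathcal{F})$.

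Next I would check the remaining requirement in \prettyref{thm:solution_ACP}(b). Fix $\alpha\in\mathfrak{A}$. Applying the defining property \eqref{eq:asymp_right_res_1} of the asymptotic right resolvent to the index $(\mathscr{A}_{0},\alpha)$ produces $(\mathscr{B},\beta)\in\mathfrak{C}$ with $(\mathscr{A}_{0},\alpha)\leq(\mathscr{B},\beta)$, in particular $\alpha\leq\beta$ and a continuous inclusion $E_{\mathscr{A}_{0}}\hookrightarrow E_{\mathscr{B}}$; I take this $\beta$. Writing $\widetilde{x}_{0}^{\mathscr{B}}$ for the image of $\widetilde{x}_{0}$ under this inclusion, the compatibility of $\mathcal{R}$ in the first coordinate (precomposition with the inclusion) gives $S_{\beta}(\lambda)=R_{\mathscr{B},\beta}(\lambda)\widetilde{x}_{0}^{\mathscr{B}}$ on $G_{\mathscr{B},\beta}$. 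Hence on $G_{\mathscr{B},\beta}\cap\widetilde{G}_{\mathscr{B},\alpha}$ the resolvent equation \eqref{eq:asymp_right_res_1} yields, using that $\widetilde{x}_{0}^{\mathscr{B}}$ represents $x_{0}$ in $E$ and therefore ${\kappa_{\alpha}^{E}}_{\mid E_{\mathscr{B}}}\widetilde{x}_{0}^{\mathscr{B}}=\kappa_{\alpha}^{E}(x_{0})$,
\[
s_{\alpha}^{\beta}(\lambda)=(\lambda I_{\alpha}^{\beta}-A_{\alpha}^{\beta})R_{\mathscr{B},\beta}(\lambda)\widetilde{x}_{0}^{\mathscr{B}}-\kappa_{\alpha}^{E}(x_{0})=\bigl({\kappa_{\alpha}^{E}}_{\mid E_{\mathscr{B}}}+T_{\mathscr{B},\alpha}(\lambda)\bigr)\widetilde{x}_{0}^{\mathscr{B}}-\kappa_{\alpha}^{E}(x_{0})=T_{\mathscr{B},\alpha}(\lambda)\widetilde{x}_{0}^{\mathscr{B}}.
\]
This identifies $s_{\alpha}^{\beta}$ with $T_{\mathscr{B},\alpha}(\cdot)\widetilde{x}_{0}^{\mathscr{B}}$ on the intersection.

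Finally I would derive the growth condition \eqref{eq:solution_ACP_2}. Given $j\in\N$ and $0<\varphi<\tfrac{\pi}{2}$, the estimate \eqref{eq:asymp_right_res_2} supplies $r>0$ with $\Gamma_{r,\varphi}\subset G_{\mathscr{B},\beta}\cap\widetilde{G}_{\mathscr{B},\alpha}$ and $\sup_{\lambda\in\Gamma_{r,\varphi}}\|T_{\mathscr{B},\alpha}(\lambda)\|_{L(E_{\mathscr{B}},E_{\alpha})}e^{j\re(\lambda)-\frac{1}{j}|\im(\lambda)|}<\infty$. Because the family of domains of $\mathcal{R}$ is directed and $(\mathscr{A}_{0},\beta)\leq(\mathscr{B},\beta)$, one has $G_{\mathscr{B},\beta}\subset G_{\mathscr{A}_{0},\beta}=G_{\beta}$, so such a sector automatically lies in $G_{\beta}$; there $s_{\alpha}^{\beta}=T_{\mathscr{B},\alpha}(\cdot)\widetilde{x}_{0}^{\mathscr{B}}$ and
\[
\sup_{\lambda\in\Gamma_{r,\varphi}}\|s_{\alpha}^{\beta}(\lambda)\|_{\alpha}e^{j\re(\lambda)-\frac{1}{j}|\im(\lambda)|}\leq\|\widetilde{x}_{0}^{\mathscr{B}}\|_{E_{\mathscr{B}}}\sup_{\lambda\in\Gamma_{r,\varphi}}\|T_{\mathscr{B},\alpha}(\lambda)\|_{L(E_{\mathscr{B}},E_{\alpha})}e^{j\re(\lambda)-\frac{1}{j}|\im(\lambda)|}<\infty.
\]
Thus \eqref{eq:solution_ACP_2} holds and \prettyref{thm:solution_ACP} yields a solution of the ACP in the sense of hyperfunctions. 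The main point to be careful about is the direction of the domain inclusions: the resolvent identity is only valid on the smaller set $G_{\mathscr{B},\beta}\cap\widetilde{G}_{\mathscr{B},\alpha}$, and it is precisely the directedness $G_{\mathscr{B},\beta}\subset G_{\beta}$ that lets the sectors on which one controls $s_{\alpha}^{\beta}$ sit inside the domain of definition of $S_{\beta}$; matching the two index coordinates of $\mathfrak{C}$ to the single index of \prettyref{thm:solution_ACP} is the only genuine bookkeeping.
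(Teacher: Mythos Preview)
Your proof is correct and follows the same overall strategy as the paper: evaluate the asymptotic right resolvent at $x_{0}$ to produce $\mathcal{S}$ and then verify condition (b) of \prettyref{thm:solution_ACP}. Your execution is, however, more direct. The paper builds the domain family as a union $G_{\alpha}\coloneq\bigcup_{(\mathscr{B},\beta)\in M_{\alpha}}(G_{\mathscr{B},\beta}\cap\widetilde{G}_{\mathscr{B},\alpha})$ and defines $S_{\alpha}$ piecewise as $\kappa_{\alpha}^{\beta}R_{\mathscr{B},\beta}(\cdot)x_{0}$, which then requires separate arguments for well-definedness on overlaps, directedness of $(G_{\alpha})_{\alpha}$, and compatibility. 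You instead take $G_{\alpha}\coloneq G_{\mathscr{A}_{0},\alpha}$ and $S_{\alpha}\coloneq R_{\mathscr{A}_{0},\alpha}(\cdot)\widetilde{x}_{0}$ directly, and the compatibility of $\mathcal{R}$ in each coordinate immediately gives both the compatibility of $\mathcal{S}$ and the identification $S_{\beta}=R_{\mathscr{B},\beta}(\cdot)\widetilde{x}_{0}^{\mathscr{B}}$ on $G_{\mathscr{B},\beta}$ needed to invoke \eqref{eq:asymp_right_res_1}. The paper's pieces in fact all collapse to your formula (it computes $\kappa_{\alpha}^{\beta_{i}}R_{\mathscr{B}_{i},\beta_{i}}(\lambda)x_{0}=R_{\mathscr{A},\alpha}(\lambda)x_{0}$ along the way), so nothing is lost by starting there. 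One minor remark: bornologicality is not what guarantees $x_{0}\in E_{\mathscr{A}_{0}}$ for some $\mathscr{A}_{0}\in\mathfrak{B}^{E}$ (singletons are bounded in any locally convex space); it enters only through the projective description $L_{b}(E,F_{A})=\varprojlim L(E_{\mathscr{B}},F_{\alpha})$ that underlies the notion of asymptotic right resolvent.
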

\begin{proof}
In order to apply \prettyref{thm:solution_ACP} we have to construct a suitable spectral-valued holomorphic function
$\mathcal{S}\coloneq (S_{\alpha})_{\alpha\in\mathfrak{A}}\in H_{\operatorname{exp}}(\mathcal{F})$. 
For $x_{0}\in E$ we choose $\mathscr{A}\in\mathfrak{B}^{E}$ such that $x_{0}\in \mathscr{A}$. 
For $\alpha\in\mathfrak{A}$ we set 
\[
M_{\alpha}\coloneq\{(\mathscr{B},\beta)\in\mathfrak{C}\;|\;(\mathscr{A},\alpha)\leq(\mathscr{B},\beta),\;
\eqref{eq:asymp_right_res_1}\;\text{and}\;\eqref{eq:asymp_right_res_2}\;\text{are satisfied} \}
\] 
and
\[
 G_{\alpha}\coloneq\bigcup_{(\mathscr{B},\beta)\in M_{\alpha}}(G_{\mathscr{B},\beta}\cap 
                                                         \widetilde{G}_{\mathscr{B},\alpha}).
\]
The sets $G_{\alpha}\subset\C$ are non-empty by assumption as well as open, connected and postsectorial 
as they are unions of such sets. 
Next, we show that $M_{\alpha}\subset M_{\gamma}$ for $\alpha,\gamma\in\mathfrak{A}$ with $\gamma\leq\alpha$, 
which then implies $G_{\alpha}\subset G_{\gamma}$ and 
means that $\mathcal{G}_{0}\coloneq (G_{\alpha})_{\alpha\in\mathfrak{A}}$ is directed. 
Let $(\mathscr{B},\beta)\in M_{\alpha}$. Then $(\mathscr{A},\gamma)\leq(\mathscr{B},\beta)$ and we note that 
\begin{equation}\label{thm:solution_ACP_sufficient_0}
\kappa_{\mathscr{B},\gamma}^{\mathscr{B},\alpha}(f)=\kappa_{\gamma}^{\alpha}\circ f
\end{equation}
for all $f\in L(E_{\mathscr{B}},E_{\alpha})$ where $\kappa_{\mathscr{B},\gamma}^{\mathscr{B},\alpha}\colon L(E_{\mathscr{B}},E_{\alpha})\to L(E_{\mathscr{B}},E_{\gamma})$
is the linking map of the local Banach spaces. It holds by \eqref{eq:asymp_right_res_1} and 
the compatibility condition (ii) for $\mathcal{T}$ that
\begin{align}\label{eq:solution_ACP_sufficient_1}
 (\lambda I_{\gamma}^{\beta}-A_{\gamma}^{\beta})R_{\mathscr{B},\beta}(\lambda)
&=\kappa_{\gamma}^{\alpha}(\lambda I_{\alpha}^{\beta}-A_{\alpha}^{\beta})R_{\mathscr{B},\beta}(\lambda)
 =\kappa_{\gamma}^{\alpha}({\kappa_{\alpha}^{E}}_{\mid E_{\mathscr{B}}}+T_{\mathscr{B},\alpha}(\lambda))\notag\\
&={\kappa_{\gamma}^{E}}_{\mid E_{\mathscr{B}}}+\kappa_{\gamma}^{\alpha}\circ T_{\mathscr{B},\alpha}(\lambda)
 \underset{(ii),\eqref{thm:solution_ACP_sufficient_0}}{=}{\kappa_{\gamma}^{E}}_{\mid E_{\mathscr{B}}}
 +T_{\mathscr{B},\gamma}(\lambda)
\end{align}
for all $\lambda\in G_{\mathscr{B},\beta}\cap\widetilde{G}_{\mathscr{B},\alpha}$. 
Since $(G_{\mathscr{B},\beta}\cap\widetilde{G}_{\mathscr{B},\alpha})\subset (G_{\mathscr{B},\beta}
\cap\widetilde{G}_{\mathscr{B},\gamma})$,
the identity theorem implies that \eqref{eq:solution_ACP_sufficient_1} holds on the connected set 
$G_{\mathscr{B},\beta}\cap\widetilde{G}_{\mathscr{B},\gamma}$ as well. Moreover, from the inclusion 
$(G_{\mathscr{B},\beta}\cap\widetilde{G}_{\mathscr{B},\alpha})\subset (G_{\mathscr{B},\beta}
\cap\widetilde{G}_{\mathscr{B},\gamma})$ and 
$T_{\mathscr{B},\gamma}(\lambda)=(\kappa_{\gamma}^{\alpha}\circ T_{\mathscr{B},\alpha})(\lambda)$ for all 
$\lambda\in\widetilde{G}_{\mathscr{B},\alpha}$ it follows that \eqref{eq:asymp_right_res_2} holds with $\alpha$ 
replaced by $\gamma$ too. Hence $(\mathscr{B},\beta)\in M_{\gamma}$, implying $M_{\alpha}\subset M_{\gamma}$.

Now, let $(\mathscr{B}_{1},\beta_{1}),(\mathscr{B}_{2},\beta_{2})\in M_{\alpha}$. 
Then $\emptyset\neq(G_{\mathscr{B}_{1},\beta_{1}}\cap G_{\mathscr{B}_{2},\beta_{2}})
\subset G_{\mathscr{A},\alpha}$ and the compatibility (ii) for $\mathcal{R}$ yields
\[
 \langle \kappa_{\alpha}^{\beta_{i}}\circ R_{\mathscr{B}_{i},\beta_{i}}(\lambda),x_{0}\rangle 
=\langle (\kappa_{\mathscr{A},\alpha}^{\mathscr{B}_{i},\beta_{i}}\circ 
  R_{\mathscr{B}_{i},\beta_{i}})(\lambda),x_{0}\rangle 
\underset{(ii)}{=}R_{\mathscr{A},\alpha}(\lambda)(x_{0})
\]
for all $\lambda\in G_{\mathscr{B}_{i},\beta_{i}}$ and $i=1,2$ where 
$\kappa_{\mathscr{A},\alpha}^{\mathscr{B}_{i},\beta_{i}}\colon 
L(E_{\mathscr{B}_{i}},F_{\beta_{i}})\to L(E_{\mathscr{A}},F_{\alpha})$ is the linking map of 
the local Banach spaces. This implies 
$\langle \kappa_{\alpha}^{\beta_{1}}\circ R_{\mathscr{B}_{1},\beta_{1}}(\lambda),x_{0}\rangle 
=\langle \kappa_{\alpha}^{\beta_{2}}\circ R_{\mathscr{B}_{2},\beta_{2}}(\lambda),x_{0}\rangle$ for all 
$\lambda\in G_{\mathscr{B}_{1},\beta_{1}}\cap G_{\mathscr{B}_{2},\beta_{2}}$. 
Therefore the map $S_{\alpha}\colon G_{\alpha}\to F_{\alpha}$ given by 
$S_{\alpha}(\lambda)\coloneq\langle\kappa_{\alpha}^{\beta}\circ R_{\mathscr{B},\beta}(\lambda),x_{0}\rangle$ on 
$G_{\mathscr{B},\beta}\cap\widetilde{G}_{\mathscr{B},\alpha}$ for 
$(\mathscr{B},\beta)\in M_{\alpha}$ is well-defined and holomorphic on $G_{\alpha}$. 
This gives us (i) for $S\coloneq (S_{\alpha})_{\alpha\in\mathfrak{A}}\colon \mathcal{G}_{0}\to\mathcal{F}$.

Let us turn to the compatibility condition (ii) for $S$. Let $\alpha,\gamma\in\mathfrak{A}$ with $\alpha\leq\gamma$. 
Then for any $(\mathscr{B},\beta)\in M_{\gamma}\subset M_{\alpha}$ and 
$\lambda\in G_{\mathscr{B},\beta}\cap\widetilde{G}_{\mathscr{B},\alpha}$ we have 
\[
 (\kappa_{\alpha}^{\gamma}\circ S_{\gamma})(\lambda)
=\langle\kappa_{\alpha}^{\gamma}\circ \kappa_{\gamma}^{\beta}\circ R_{\mathscr{B},\beta}(\lambda),x_{0}\rangle
=\langle\kappa_{\alpha}^{\beta}\circ R_{\mathscr{B},\beta}(\lambda),x_{0}\rangle
=S_{\alpha}(\lambda).
\]
We derive that $S$ fulfils (ii) and is an $\mathcal{F}$-valued holomorphic function.

Since $\mathcal{R}\in H_{\operatorname{exp}}(\mathcal{L}(E,F_{A}))$, for $(\mathscr{B},\beta)\in M_{\alpha}$ 
and any $K\in\N$ and any $0<\varphi<\tfrac{\pi}{2}$, there is $r>0$ such that 
$\Gamma_{r,\varphi}\subset G_{\mathscr{B},\beta}$ and 
\[
\sup_{\lambda\in\Gamma_{r,\varphi}}\|R_{\mathscr{B},\beta}(\lambda)\|_{L(E_{\mathscr{B}},F_{\beta})}
e^{-\frac{1}{K}\re(\lambda)}<\infty.
\]
The set $G_{\mathscr{B},\beta}\cap\widetilde{G}_{\mathscr{B},\alpha}$ is postsectorial and so there is $t\geq r$ with 
$\Gamma_{t,\varphi}\subset \Gamma_{r,\varphi}$ and 
$\Gamma_{t,\varphi}\subset (G_{\mathscr{B},\beta}\cap\widetilde{G}_{\mathscr{B},\alpha})$. 
We remark that the continuity of $\kappa_{\alpha}^{\beta}$ implies that there is $C_{1}>0$ such that 
\begin{align*}
 \|S_{\alpha}(\lambda)\|_{\alpha}
&=\|\langle\kappa_{\alpha}^{\beta}\circ R_{\mathscr{B},\beta}(\lambda),x_{0}\rangle\|_{\alpha}
 \leq C_{1}\|R_{\mathscr{B},\beta}(\lambda)x_{0}\|_{\beta,A}\\
&\leq C_{1}\|R_{\mathscr{B},\beta}(\lambda)\|_{L(E_{\mathscr{B}},F_{\beta})}\|x_{0}\|_{E_{\mathscr{B}}}
\end{align*}
for all $\lambda\in G_{\mathscr{B},\beta}\cap\widetilde{G}_{\mathscr{B},\alpha}$. It follows that 
\[
     \sup_{\lambda\in\Gamma_{t,\varphi}} \|S_{\alpha}(\lambda)\|_{\alpha}e^{-\frac{1}{K}\re(\lambda)}
\leq C_{1}\|x_{0}\|_{E_{\mathscr{B}}}\sup_{\lambda\in\Gamma_{r,\varphi}}
     \|R_{\mathscr{B},\beta}(\lambda)\|_{L(E_{\mathscr{B}},F_{\beta})}e^{-\frac{1}{K}\re(\lambda)}<\infty
\]
and we conclude that $S\in H_{\operatorname{exp}}(\mathcal{F})$.

We define $s_{\alpha}^{\beta}\colon G_{\beta}\to E_{\alpha}$ for $\beta$ with $(\mathscr{B},\beta)\in M_{\alpha}$ 
by \eqref{eq:solution_ACP_1} as before and note that for $(\mathscr{B}_{1},\gamma)\in M_{\beta}\subset M_{\alpha}$
\begin{align*}
 s_{\alpha}^{\beta}(\lambda)
&=(\lambda I_{\alpha}^{\beta}-A_{\alpha}^{\beta})S_{\beta}(\lambda)-\kappa_{\alpha}^{E}(x_{0})
 =(\lambda I_{\alpha}^{\beta}-A_{\alpha}^{\beta})\kappa_{\beta}^{\gamma}R_{\mathscr{B}_{1},\gamma}(\lambda)x_{0}
 -\kappa_{\alpha}^{E}(x_{0})\\
&=(\lambda I_{\alpha}^{\gamma}-A_{\alpha}^{\gamma})R_{\mathscr{B}_{1},\gamma}(\lambda)x_{0}
 -\kappa_{\alpha}^{E}(x_{0})
\underset{\eqref{eq:asymp_right_res_1}}{=}\kappa_{\alpha}^{E}(x_{0})+T_{\mathscr{B}_{1},\alpha}(\lambda)x_{0}
 -\kappa_{\alpha}^{E}(x_{0})\\
&=T_{\mathscr{B}_{1},\alpha}(\lambda)x_{0}
\end{align*}
for all $\lambda\in (G_{\mathscr{B}_{1},\gamma}\cap\widetilde{G}_{\mathscr{B}_{1},\beta})\subset
(G_{\mathscr{B}_{1},\gamma}\cap\widetilde{G}_{\mathscr{B}_{1},\alpha})$. As $(\mathscr{B}_{1},\gamma)\in M_{\beta}$, 
for any $j\in\N$ and any $0<\varphi<\tfrac{\pi}{2}$ there is $r>0$ with 
$\Gamma_{r,\varphi}\subset (G_{\mathscr{B}_{1},\gamma}\cap \widetilde{G}_{\mathscr{B}_{1},\beta})$ and
\begin{equation}\label{eq:solution_ACP_sufficient_2}
\sup_{\lambda\in\Gamma_{r,\varphi}}\|T_{\mathscr{B}_{1},\beta}(\lambda)\|_{L(E_{\mathscr{B}_{1}},E_{\beta})}
e^{j\re(\lambda)-\frac{1}{j}|\im(\lambda)|}<\infty.
\end{equation}
From the compatibility condition (ii) of $\mathcal{T}$ we deduce that 
\[
 s_{\alpha}^{\beta}(\lambda)
=T_{\mathscr{B}_{1},\alpha}(\lambda)x_{0}
=\langle(\kappa_{\mathscr{B}_{1},\alpha}^{\mathscr{B}_{1},\beta}
 \circ T_{\mathscr{B}_{1},\beta})(\lambda),x_{0}\rangle
\]
for all $\lambda\in G_{\mathscr{B}_{1},\gamma}\cap\widetilde{G}_{\mathscr{B}_{1},\beta}$ where 
$\kappa_{\mathscr{B}_{1},\alpha}^{\mathscr{B}_{1},\beta}\colon L(E_{\mathscr{B}_{1}},E_{\beta})\to 
L(E_{\mathscr{B}_{1}},E_{\alpha})$ is the linking map of the local Banach spaces. 
The continuity of the linking map implies that there is $C_{2}>0$ such that 
\begin{align*}
 \|s_{\alpha}^{\beta}(\lambda)\|_{\alpha}
&=\|\langle(\kappa_{\mathscr{B}_{1},\alpha}^{\mathscr{B}_{1},\beta}
  \circ T_{\mathscr{B}_{1},\beta})(\lambda),x_{0}\rangle\|_{\alpha}
 \leq C_{2}\|T_{\mathscr{B}_{1},\beta}(\lambda)x_{0}\|_{\beta}\\
&\leq C_{2}\|T_{\mathscr{B}_{1},\beta}(\lambda)\|_{L(E_{\mathscr{B}_{1}},E_{\beta})}\|x_{0}\|_{E_{\mathscr{B}_{1}}}
\end{align*}
for all $\lambda\in G_{\mathscr{B}_{1},\gamma}\cap\widetilde{G}_{\mathscr{B}_{1},\beta}$. In combination with 
\eqref{eq:solution_ACP_sufficient_2} we get \eqref{eq:solution_ACP_2}. 
Applying \prettyref{thm:solution_ACP}, we obtain our statement.
\end{proof}

We illustrate \prettyref{thm:solution_ACP} by an application to the one-dimensional heat equation in the space 
of tempered distributions.
Let $\mathcal{S}(\R)$ be the Schwartz space, i.e.\
\[
\mathcal{S}(\R)
\coloneq\{f\in \mathcal{C}^{\infty}(\R)\;|\;
\forall\;n\in\N_{0}:\;|f|_{n}^{\mathcal{S}(\R)}<\infty\}
\]
where 
\[
 |f|_{n}^{\mathcal{S}(\R)}\coloneq\sup_{\substack{x\in\R\\ m\in\N_{0},m\leq n}}
 |f^{(m)}(x)|(1+|x|^{2})^{n/2}.
\]
Further, we equip the space $\mathcal{C}^{\infty}(\R)$ with its usual topology of uniform convergence of 
partial derivatives up to any order on compact subsets of $\R$. 

\begin{thm}
Let $x_{0}\in\mathcal{C}^{\infty}(\R)'$. Then the ACP
\[
x'(t)=\Delta x(t),\quad t>0, \quad x(0)=x_{0},
\]
has a solution $x\in \mathcal{B}([0,\infty[,\mathcal{S}(\R)_{b}')$ in the sense of hyperfunctions. 
\end{thm}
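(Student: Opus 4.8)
The plan is to apply \prettyref{thm:solution_ACP} with $E\coloneq\mathcal{S}(\R)_b'$ and $A\coloneq\Delta$, constructing the spectral-valued holomorphic function in condition (b) from the \emph{genuine} resolvent of $\Delta$, so that the defect $s_\alpha^\beta$ in \eqref{eq:solution_ACP_1} vanishes identically. First I would settle the standing hypotheses. Since $\mathcal{S}(\R)$ is a Fréchet space with property $(DN)$, its strong dual $E=\mathcal{S}(\R)_b'$ is (strictly) admissible by \prettyref{thm:examples_strictly_admiss}~(b), and being the strong dual of a reflexive Fréchet space it is complete. As differentiation is continuous on $\mathcal{S}(\R)_b'$, the Laplacian $A\coloneq\Delta$ lies in $L(E)$; hence $F\coloneq D(A)=E$, the graph topology $\tau_A$ coincides with that of $E$, and $F_A=E$ is admissible with $\mathcal{F}=\mathcal{E}$. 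Finally, the continuous dense inclusion $\mathcal{S}(\R)\hookrightarrow\mathcal{C}^\infty(\R)$ dualises to an injection $\mathcal{C}^\infty(\R)'\hookrightarrow\mathcal{S}(\R)'$, so the prescribed $x_0$ may be regarded as an element of $E$.

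Next I would build the resolvent by Fourier transform. As the Fourier transform $\mathcal{F}$ is a topological isomorphism of $\mathcal{S}(\R)$ and of $\mathcal{S}(\R)_b'$ conjugating $\Delta$ into multiplication by $-\xi^2$, the operator $R(\lambda)\coloneq(\lambda-\Delta)^{-1}$ is, for every $\lambda\in\C_{\re >0}$, the Fourier multiplier with symbol $m_\lambda(\xi)\coloneq(\lambda+\xi^2)^{-1}$; it is continuous and linear on $E$, and $\lambda\mapsto R(\lambda)x_0$ is holomorphic on the postsectorial set $G\coloneq\C_{\re >0}$ with values in $E$. I then set $G_\alpha\coloneq G$ and $S_\alpha\coloneq\kappa_\alpha^E\circ R(\cdot)x_0$ for every $\alpha\in\mathfrak{A}$. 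Holomorphy of each $S_\alpha$ and the compatibility (ii) are then immediate, since all $S_\alpha$ arise from the single $E$-valued map $\lambda\mapsto R(\lambda)x_0$ composed with the spectral maps.

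The membership $\mathcal{S}\coloneq(S_\alpha)_\alpha\in H_{\operatorname{exp}}(\mathcal{E})$ is the technical heart, and I expect it to be the main obstacle; it reduces to a resolvent bound that is uniform on sectors. For $\lambda=\rho e^{i\psi}\in\Gamma_{r,\varphi}$ with $|\psi|\le\varphi<\tfrac{\pi}{2}$ one has $\re(\lambda+\xi^2)=\rho\cos\psi+\xi^2\ge\cos\varphi\,(|\lambda|+\xi^2)$, whence $|m_\lambda(\xi)|\le(\cos\varphi)^{-1}(|\lambda|+\xi^2)^{-1}$ and, more generally, $|\partial_\xi^k m_\lambda(\xi)|\le C_{k,\varphi}(1+|\xi|)^k(|\lambda|+\xi^2)^{-(k+1)}$. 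Since multiplication by a smooth bounded symbol with bounded derivatives is continuous on $\mathcal{S}(\R)$ with continuity constants controlled by finitely many of the unweighted norms $\|\partial_\xi^k m_\lambda\|_\infty$, and $\mathcal{F}^{\pm1}$ are fixed isomorphisms of $\mathcal{S}(\R)$, I obtain for each seminorm $p_\alpha=p_B$ of $E$ an estimate $p_\alpha(R(\lambda)x_0)\le C_\alpha|\lambda|^{-1}$ on $\Gamma_{r,\varphi}$; in particular $\|S_\alpha(\lambda)\|_\alpha$ is bounded there. As $e^{-\frac1K\re(\lambda)}\le1$ on $G$, the defining estimate of $H_{\operatorname{exp}}(\mathcal{E})$ holds, so $\mathcal{S}\in H_{\operatorname{exp}}(\mathcal{E})=H_{\operatorname{exp}}(\mathcal{F})$.

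It remains to verify condition (b) of \prettyref{thm:solution_ACP}. Given $\alpha$, I choose $\beta\ge\alpha$ for which $A_\alpha^\beta$ is defined, which is possible since $A\in L(E)$. Because $R(\lambda)$ is the genuine resolvent, $(\lambda-A)R(\lambda)x_0=x_0$ in $E$, and therefore
\[
s_\alpha^\beta(\lambda)=(\lambda I_\alpha^\beta-A_\alpha^\beta)S_\beta(\lambda)-\kappa_\alpha^E(x_0)
=\kappa_\alpha^E\bigl((\lambda-A)R(\lambda)x_0-x_0\bigr)=0
\]
for all $\lambda\in G_\beta$, so that \eqref{eq:solution_ACP_2} is satisfied trivially. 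Then \prettyref{thm:solution_ACP} yields a solution in the sense of hyperfunctions lying in $\mathcal{B}([0,\infty[,F_A)=\mathcal{B}([0,\infty[,\mathcal{S}(\R)_b')$, as required. The only genuine analytic work is the uniform-on-sectors multiplier estimate of the third paragraph; everything else is bookkeeping within the local Banach space spectrum of $\mathcal{S}(\R)_b'$.
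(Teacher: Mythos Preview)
Your proposal is correct and follows essentially the same strategy as the paper: construct the genuine resolvent $R(\lambda)=(\lambda-\Delta)^{-1}$ on $\mathcal{S}(\R)_b'$, set $S_\alpha=\kappa_\alpha^{E}\circ R(\cdot)x_0$ on $G_\alpha=\C_{\re>0}$, and apply \prettyref{thm:solution_ACP} with $s_\alpha^\beta\equiv 0$. The only difference is the representation of the resolvent. The paper realises it as convolution with the explicit Green kernel $f(\lambda,s)=\tfrac{1}{2\sqrt{\lambda}}e^{-\sqrt{\lambda}|s|}$ and uses the $L^1$-bound $\|f(\lambda,\cdot)\|_{L^1}\le\sqrt{2}/|\lambda|$ together with the compact support of $x_0$ to control $\sup_s|(\check{x}_0\ast\psi)(s)|$; you realise it as the Fourier multiplier with symbol $(\lambda+\xi^2)^{-1}$ and estimate the symbol and its $\xi$-derivatives on sectors. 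Your route is a shade more general: the uniform multiplier bounds on $\Gamma_{r,\varphi}$ yield $p_B(R(\lambda)x_0)\le C_{B,\varphi}|\lambda|^{-1}p_{B'}(x_0)$ for \emph{every} $x_0\in\mathcal{S}(\R)'$, so you never actually use $x_0\in\mathcal{C}^{\infty}(\R)'$ beyond the inclusion $\mathcal{C}^{\infty}(\R)'\hookrightarrow\mathcal{S}(\R)'$, whereas the paper's convolution estimate would fail without compact support.
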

\begin{proof}
We set $f\colon \C_{\re >0}\times\R\to \C$, 
$f(\lambda,s)\coloneq\tfrac{1}{2\sqrt{\lambda}}e^{-\sqrt{\lambda}|s|}$, where $\sqrt{\cdot}$ is the principal square root, 
i.e.\ $\sqrt{\lambda}=\sqrt{|\lambda|}(\cos(\tfrac{\operatorname{arg}(\lambda)}{2})
+i\sin(\tfrac{\operatorname{arg}(\lambda)}{2}))$ with the principal argument 
$\operatorname{arg}(\lambda)\in (-\tfrac{\pi}{2},\tfrac{\pi}{2})$ for $\lambda\in \C_{\re >0}$.
Then $f(\lambda,\cdot)$ is continuous and thus Borel-measurable for all $\lambda\in\C_{\re >0}$ and 
\begin{equation}\label{eq:integrable_heat_eq}
 \int_{\R}|f(\lambda,s)|\d s
=\frac{1}{2\sqrt{|\lambda|}}\int_{\R}e^{-\sqrt{|\lambda|}\cos(\frac{\operatorname{arg}(\lambda)}{2})|s|}\d s
 \leq \frac{1}{2\sqrt{|\lambda|}}\int_{\R}e^{-\sqrt{\frac{|\lambda|}{2}}|s|}\d s
%=\frac{1}{2\sqrt{|\lambda|}}\frac{2\sqrt{2}}{\sqrt{|\lambda|}}
 =\frac{\sqrt{2}}{|\lambda|},
\end{equation}
which means that $f(\lambda,\cdot)\in L^{1}(\R,\C)$ for all $\lambda\in\C_{\re >0}$. Therefore the distributional 
convolution $f(\lambda,\cdot)\ast x_{0}\in\mathcal{S}(\R)'$ for all $\lambda\in\C_{\re >0}$ 
by \cite[Theorem 7.1.15, p.\ 166]{H1} where 
\[
\langle f(\lambda,\cdot)\ast x_{0},\psi\rangle\coloneq\langle f(\lambda,\cdot) ,\check{x}_{0}\ast\psi\rangle 
=\int_{\R}f(\lambda,s)(\check{x}_{0}\ast\psi)(s)\d s
\] 
and 
\[
(\check{x}_{0}\ast\psi)(s)\coloneq\langle x_{0},\psi(s+\cdot)\rangle,\;s\in\R,
\]
for $\psi\in\mathcal{S}(\R)$. 
The map $\lambda\mapsto\langle f(\lambda,\cdot)\ast x_{0},\psi\rangle$ is holomorphic for all 
$\psi\in\mathcal{S}(\R)$ by differentiation under the integral w.r.t.\ the parameter $\lambda$ 
due to \eqref{eq:integrable_heat_eq} and \cite[5.8 Satz, p.\ 148--149]{elstrodt2005} with the majorant 
$g_{K}(s)\coloneq\tfrac{1}{2\sqrt{C_{K}}}e^{-\sqrt{\frac{C_{K}}{2}}|s|}|(\check{x}_{0}\ast\psi)(s)|$, $s\in\R$, where 
$C_{K}\coloneq\min_{\lambda\in K}|\lambda|$ for any compact disc $K\subset\C_{\re >0}$. As $\mathcal{S}(\R)$ is reflexive 
and $\mathcal{S}(\R)_{b}'$ complete, this means that $\lambda\mapsto f(\lambda,\cdot)\ast x_{0}$ 
is weakly holomorpic and thus holomorphic, 
i.e.\ $(\lambda\mapsto f(\lambda,\cdot)\ast x_{0})\in\mathcal{O}(\C_{\re >0},\mathcal{S}(\R)_{b}')$, by 
\cite[16.7.2 Theorem, p.\ 362--363]{Jarchow}. 
Since $x_{0}\in \mathcal{C}^{\infty}(\R)'$, there are $C_{0}\geq 0$ and $n\in\N_{0}$ such that for all 
$\lambda\in\C_{\re >0}$ and $\psi\in\mathcal{S}(\R)$ we have
\begin{align*}
 |\langle f(\lambda,\cdot)\ast x_{0},\psi\rangle|
&=\bigl|\int_{\R}f(\lambda,s)(\check{x}_{0}\ast\psi)(s)\d s\bigr|\\
&\leq \frac{\sqrt{2}}{|\lambda|}\sup_{s\in\R}|(\check{x}_{0}\ast\psi)(s)|
 \leq \frac{\sqrt{2}C_{0}}{|\lambda|}\sup_{s\in\R}\sup_{\substack{y\in [-n,n]\\m\leq n}}|\psi^{(m)}(s+y)|\\
&\leq \frac{\sqrt{2}C_{0}}{|\lambda|}\sup_{\substack{s\in\R\\m\leq n}}|\psi^{(m)}(s)|(1+|s|^{2})^{\tfrac{n}{2}}
 =\frac{\sqrt{2}C_{0}}{|\lambda|}|\psi|_{n}^{\mathcal{S}(\R)},
\end{align*}
implying for every bounded set $\beta\subset\mathcal{S}(\R)$ that 
\[
 \sup_{\psi\in \beta}|\langle f(\lambda,\cdot)\ast x_{0},\psi\rangle|
\leq \frac{\sqrt{2}C_{0}}{|\lambda|}\sup_{\psi\in \beta}|\psi|_{n}^{\mathcal{S}(\R)}
=\frac{\sqrt{2}C_{0}C_{n,\beta}}{|\lambda|}
\]
with $C_{n,\beta}\coloneq\sup_{\psi\in \beta}|\psi|_{n}^{\mathcal{S}}<\infty$. We deduce that 
for all bounded sets $\beta\subset\mathcal{S}(\R)$, $K\in\N$, $0<\varphi<\tfrac{\pi}{2}$ and any $r\geq 1$ 
it holds that $\Gamma_{r,\varphi}\subset \C_{\re >0}$ and 
\begin{align}\label{eq:sector_heat_eq}
 \sup_{\lambda\in\Gamma_{r,\varphi}}\sup_{\psi\in \beta}|\langle f(\lambda,\cdot)\ast x_{0},\psi\rangle|
    e^{-\frac{1}{K}\re(\lambda)}
&\leq\sup_{\lambda\in\Gamma_{r,\varphi}}\sup_{\psi\in \beta}|\langle f(\lambda,\cdot)\ast x_{0},\psi\rangle||\lambda|
 \notag\\
&\leq\sqrt{2}C_{0}C_{n,\beta}<\infty.
\end{align}
Furthermore, we have
\begin{equation}\label{eq:resolvent_heat_eq}
 (\lambda-\Delta)(f(\lambda,\cdot)\ast x_{0})
=((\lambda-\Delta)f(\lambda,\cdot))\ast x_{0}
=\delta_{s=0}\ast x_{0}=x_{0}
\end{equation}
for all $\lambda\in\C_{\re >0}$ (cf.\ \cite[p.\ 249--251]{Kom6}). 
The complete space $E\coloneq F\coloneq\mathcal{S}(\R)_{b}'$ is admissible by \cite[Example 4.4, p.\ 14--15]{kruse2019_5},
$\Delta\colon E\to F$ continuous, in particular $F=F_{\Delta}$, and $x_{0}\in\mathcal{C}^{\infty}(\R)'
\subset\mathcal{S}(\R)'$ as linear spaces. Setting  
$S_{\beta}(\lambda)\coloneq\kappa_{\beta}^{E}(f(\lambda,\cdot)\ast x_{0})$ for $\lambda\in G_{\beta}\coloneq\C_{\re >0}$ and 
\[
s_{\alpha}^{\beta}(\lambda)\coloneq (\lambda I_{\alpha}^{\beta}-\Delta_{\alpha}^{\beta})S_{\beta}(\lambda)
 -\kappa_{\alpha}^{E}(x_{0})\underset{\eqref{eq:resolvent_heat_eq}}{=}0,\quad \lambda\in G_{\beta},
\] 
for bounded sets $\alpha,\beta\subset\mathcal{S}(\R)$ with $\alpha\subset\beta$, 
we conclude the existence of a solution $x\in \mathcal{B}([0,\infty[,\mathcal{S}(\R)_{b}')$ 
in the sense of hyperfunctions from \prettyref{thm:solution_ACP} and \eqref{eq:sector_heat_eq}.
\end{proof}

\bibliography{biblio}
\bibliographystyle{plainnat}
\end{document}